\numberwithin{equation}{section}
\theoremstyle{plain}
\newtheorem{theorem}[equation]{Theorem}
\newtheorem{conjecture}[equation]{Conjecture}
\newtheorem{lemma}[equation]{Lemma}
\newtheorem{corollary}[equation]{Corollary}
\newtheorem{proposition}[equation]{Proposition}
\theoremstyle{definition}
\newtheorem{remark}[equation]{Remark}
\newtheorem{nonsec}[equation]{}
\theoremstyle{remark}
\newcommand{\R}{\mathbb{R}}
\newcommand{\B}{\mathbb{B}}
\newcommand{\K}{\mathcal{K}}
\def\Im{\mathop{\rm Im}\nolimits}
\font\fFt=eusm10 
\font\fFa=eusm7  
\font\fFp=eusm5  
\def\K{\mathchoice{\hbox{\,\fFt K}}{\hbox{\,\fFt K}}{\hbox{\,\fFa K}}{\hbox{\,\fFp K}}}
\newcounter{alphabet}
\newcounter{minutes}\setcounter{minutes}{\time}
\newcounter{hours}\setcounter{hours}{\time}
\begin{document}
\bibliographystyle{amsplain}
\title
{Hilbert metric in the unit ball}

\def\thefootnote{}
\footnotetext{
\texttt{\tiny File:~\jobname .tex,
          printed: \number\year-\number\month-\number\day,
          \thehours.\ifnum\theminutes<10{0}\fi\theminutes}
}
\makeatletter\def\thefootnote{\@arabic\c@footnote}\makeatother

\author[O. Rainio]{Oona Rainio}
\author[M. Vuorinen]{Matti Vuorinen}

\keywords{Cayley-Klein metric, Hilbert metric, hyperbolic metric, Klein-Beltrami model, Klein metric}
\subjclass[2010]{Primary 51M10; Secondary 51M16}
\begin{abstract}
The Hilbert metric between two points $x,y$ in a bounded convex domain $G$ is defined as the logarithm of the cross-ratio of $x,y$ and the intersection points of the Euclidean line passing through the points $x,y$ and the boundary of the domain. Here, we study this metric in the case of the unit ball $\mathbb{B}^n$. We present an identity between the Hilbert metric and the hyperbolic metric, give several inequalities for the Hilbert metric, and results related to the inclusion properties of the balls defined in the Hilbert metric. Furthermore, we study the distortion of the Hilbert metric under conformal and quasiregular mappings.
\end{abstract}
\maketitle

\noindent Oona Rainio$^1$, email: \texttt{ormrai@utu.fi}, ORCID: 0000-0002-7775-7656,\newline 
Matti Vuorinen$^1$, email: \texttt{vuorinen@utu.fi}, ORCID: 0000-0002-1734-8228\newline
1: University of Turku, FI-20014 Turku, Finland\\
\textbf{Funding.} O.R.'s research was funded by Finnish Culture Foundation and Magnus Ehrnrooth Foundation.\\
\textbf{Acknowledgements.} The authors are thankful to the referee for their corrections.\\
\textbf{Data availability statement.} Not applicable, no new data was generated.\\
\textbf{Conflict of interest statement.} There is no conflict of interest.


\section{Introduction}

Hyperbolic geometry is an important area of study in mathematics \cite{b98,b00,f19,h,m14}. To describe this geometric system, the Poincar\'e model is most commonly used. It is often defined for the unit disk, but can be easily extended to higher dimensions. Length minimizing segments, geodesics, in hyperbolic geometry are either line segments passing through the origin or arcs of circles orthogonal to the unit ball. The distances defined by these geodesics are measured with the conformally invariant hyperbolic metric. However, the resulting geometric system is not the only model of the hyperbolic geometry of the unit ball.

In the Klein-Beltrami model, geodesics are chords of the unit ball. As Cayley, Klein, and Hilbert all studied this geometric system at the late-19th century, there are several different names for the metric used for measuring the distances on these geodesics, though its definitions are generally overlapping. It can be called Klein, Cayley-Klein, Cayley-Hilbert-Klein, or Klein-Hilbert metric.  

For all distinct points $x$ and $y$ in a bounded convex domain $G\subset\R^n$, the \emph{Hilbert metric} is defined as \cite[Thm 2.1, p. 157]{b99}  
\begin{equation}\label{q_hil}
h_G(x,y)=\log|u,x,y,v|,
\end{equation}
where $u,v$ are the intersection points of the line $L(x,y)$ passing through points $x,y$ and the domain boundary $\partial G$ ordered in such a way that $|u-x|<|u-y|$. See the definition of the cross-ratio from \eqref{q_crs}. If $x=y$, we set $h_G(x,y)=0$. Hilbert \cite{hil} introduced this metric $h_G$ as an extension of the Klein metric for any bounded convex domain $G$. Even though we focus on this article only on the case of the $n$-dimensional unit ball $\B^n$, we study the metric $h_{\B^n}$ defined as above for $G=\B^n$ and call it the Hilbert metric to avoid possible misunderstanding.  

Unlike the hyperbolic metric $\rho_{\B^n}$, the Hilbert metric is not invariant under the M\"obius automorphisms of $\B^n$ as we easily see from the following main
result of this paper.
\begin{theorem}\label{thm_vuo}
For all $x,y\in\B^n$, the following functional identity holds between the Hilbert metric and the hyperbolic metric:
\begin{align*}
{\rm sh}\frac{h_{\B^n}(x,y)}{2}
=\sqrt{1-m_1^2}\,{\rm sh}\frac{\rho_{\B^n}(x,y)}{2},    
\end{align*}
where $m_1$ is the Euclidean distance from the origin to the line $L(x,y)$.
\end{theorem}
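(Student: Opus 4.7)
The plan is to reduce the identity to a one-dimensional calculation along the chord $L(x,y)$. First I would choose a convenient parametrization: let $p$ denote the foot of the perpendicular from the origin to $L(x,y)$, so that $|p|=m_1$, and let $e$ be a unit vector along $L(x,y)$. Then every point of the line has the form $p+te$ with $p\perp e$, and the Pythagorean theorem gives
\begin{equation*}
|p+te|^2=m_1^2+t^2.
\end{equation*}
This does two jobs at once: it locates the intersection points $u,v$ of $L(x,y)$ with $\partial\B^n$ at the parameter values $t=\pm r$, where $r:=\sqrt{1-m_1^2}$, and it expresses the denominators appearing in the hyperbolic formula as $1-|x|^2=r^2-t_x^2$ and $1-|y|^2=r^2-t_y^2$, where $t_x,t_y$ are the parameters of $x,y$.

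Next, after fixing orientation so that $-r=t_u<t_x<t_y<t_v=r$ (which is exactly what the condition $|u-x|<|u-y|$ prescribes in \eqref{q_hil}), the four Euclidean distances entering the cross-ratio are $r\pm t_x$ and $r\pm t_y$, giving
\begin{equation*}
h_{\B^n}(x,y)=\log\frac{(r+t_y)(r-t_x)}{(r+t_x)(r-t_y)}.
\end{equation*}
Applying the elementary identity $\sinh(\tfrac12\log A)=(A-1)/(2\sqrt{A})$ and noting the cancellation $A-1=2r(t_y-t_x)/((r+t_x)(r-t_y))$ collapses $\sinh(h_{\B^n}(x,y)/2)$ to
\begin{equation*}
\frac{r(t_y-t_x)}{\sqrt{(r^2-t_x^2)(r^2-t_y^2)}}.
\end{equation*}

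The final step is to identify the right-hand side using the substitutions prepared in the first paragraph: $r=\sqrt{1-m_1^2}$, $t_y-t_x=|x-y|$, $r^2-t_x^2=1-|x|^2$, $r^2-t_y^2=1-|y|^2$. The displayed expression then factors as $\sqrt{1-m_1^2}$ times the classical ball-model formula
\begin{equation*}
\sinh\frac{\rho_{\B^n}(x,y)}{2}=\frac{|x-y|}{\sqrt{(1-|x|^2)(1-|y|^2)}},
\end{equation*}
which is precisely the claimed identity. I do not foresee any substantial obstacle: once the perpendicular-foot parametrization is in place, everything reduces to the single Pythagorean relation $|p+te|^2=m_1^2+t^2$, and the rest is algebra. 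The only minor care points are the orientation convention for $(u,v)$ in the cross-ratio and the trivial cases $x=y$ and $m_1=0$ (a diameter), where the identity degenerates to the standard hyperbolic-distance formula in $\B^n$.
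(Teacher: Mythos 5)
Your proposal is correct: the perpendicular-foot parametrization $p+te$ with $p\perp e$ gives $|p+te|^2=m_1^2+t^2$, which correctly places $u,v$ at $t=\pm r$ with $r=\sqrt{1-m_1^2}$, turns the four cross-ratio distances into $r\pm t_x$, $r\pm t_y$, and the identity $\sinh(\tfrac12\log A)=(A-1)/(2\sqrt{A})$ then yields $r(t_y-t_x)/\sqrt{(r^2-t_x^2)(r^2-t_y^2)}$, which is exactly $\sqrt{1-m_1^2}\,|x-y|/\sqrt{(1-|x|^2)(1-|y|^2)}$. This is, however, a genuinely different route from the paper's. The paper first reduces to the symmetric case $|x|=|y|$ by constructing a M\"obius inversion of $\B^n$ that preserves the chord and both metrics (Lemma \ref{lem_finv}, built on Lemma \ref{lem_myinv}), then invokes the explicit formula of Proposition \ref{prop_xyc} for equidistant points and manipulates the resulting expression. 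Your computation is more self-contained and uniform: it needs no invariance lemma, no case split between $|x|=|y|$ and $|x|\neq|y|$, and it absorbs the degenerate collinear case $m_1=0$ (where Lemma \ref{lem_myinv} does not directly apply) without special treatment. What the paper's route buys in exchange is reusable machinery --- the inversion of Lemma \ref{lem_finv} and the symmetric-case formula of Proposition \ref{prop_xyc} are both used again later in the paper --- whereas your argument, while cleaner for this one theorem, produces no such by-products. Either proof is complete; just make sure to state explicitly that $t_y-t_x=|x-y|$ because $x,y$ lie on the line with unit direction $e$ and $t_x<t_y$ by the chosen orientation.
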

We apply this result to compare the Hilbert metric to other metrics and to find several inequalities
for the Hilbert metric. We also prove a distortion theorem for Hilbert metric under M\"obius
transformations.

The structure of this article is as follows. In Section 3, we prove Theorem \ref{thm_vuo} and present a few other basic results related to either the hyperbolic metric or the Hilbert metric. In Section 4, we give several inequalities for the Hilbert metric. In Section 5, we study the ball inclusion between the Hilbert metric, the Euclidean metric, and the hyperbolic metric, and also give exact formula for the balls in the Hilbert metric defined in the unit disk. Finally, in Section 6, we consider the distortion of the Hilbert metric under M\"obius
transformations and formulate one conjecture. At the end of this article, there is also one result about the distortion of the Hilbert metric under quasiregular mappings.

\section{Preliminaries}

Let us first introduce the notations used in this article. 
For $x\in\R^n\setminus\{0\}$, let $x^*=x/|x|^2$. The dot product of two points $x,y\in\R^n$ is denoted by $x\cdot y$. The cross-ratio of four points $u,x,y,v\in\R^n$ is defined as
\begin{align}\label{q_crs}
|u,x,y,v|=\frac{|u-y||x-v|}{|u-x||y-v|}   
\end{align}
and the Hilbert metric is defined as in \eqref{q_hil}. For $x\in\R^2$, $\overline{x}$ is the complex conjugate of $x$.

For two distinct points $x,y\in\R^n$, $L(x,y)$ is the Euclidean line passing through the points $x,y$ and $[x,y]$ is the Euclidean segment with $x,y$ as its end points. For a point $x\in\R^n$ and $r>0$, the open $x$-centered Euclidean ball with radius $r$ is denoted by $B^n(x,r)$ and its sphere is $S^{n-1}(x,r)$. In the special case $x=0$ and $r=1$, we use the simplified notations $\B^n$ and $S^{n-1}$.

The hyperbolic sine, cosine and tangent are denoted by sh, ch, and th, and their inverse functions are arsh, arch, and arth, respectively. The \emph{hyperbolic metric} is defined as \cite[(4.16), p. 55]{hkv}
\begin{align}\label{q_shr}
\text{sh}^2\frac{\rho_{\B^n}(x,y)}{2}=\frac{|x-y|^2}{(1-|x|^2)(1-|y|^2)},\quad x,y\in\B^n.
\end{align}
If $n=2$, this formula can be simplified to
\begin{align}\label{for_lrho}
\text{th}\frac{\rho_{\B^2}(x,y)}{2}=\left|\frac{x-y}{1-x\overline{y}}\right|.
\end{align}
The hyperbolic segment between two points $x,y\in\B^n$ is denoted by $J^*[x,y]$. To denote the hyperbolic ball with a center $x\in\B^n$ and radius $r>0$, we use $B_\rho(x,r)$. Similarly, the notation $B_h(x,r)$ is the corresponding ball in the Hilbert metric $h_{\B^n}$. Note that the metric used to define these balls are defined either in the unit ball $\B^n$ or the unit disk $\B^2$.

It follows from the Riemann mapping theorem that we can use the conformal invariance to define the hyperbolic metric in any simply-connected plane domain $G\subsetneq\R^2$. However, defining the hyperbolic metric in higher dimensions is possible only in special cases. Because of this, several authors have defined substitutes for the hyperbolic metric called \emph{hyperbolic-type metrics} for dimensions $n>2$. One of them is the \emph{distance ratio metric}, introduced by Gehring and Palka \cite{g76}, which is defined for any domain $G\subsetneq\R^n$ as the function $j_G:G\times G\to[0,\infty)$, \cite[p. 685]{chkv}
\begin{align*}
j_G(x,y)=\log\left(1+\frac{|x-y|}{\min\{d_G(x),d_G(y)\}}\right).   
\end{align*}
For a point $x$ in a domain $G\subsetneq\R^n$, the notation $d_G(x)$ means the Euclidean distance $\inf\{|x-z|\,:\,z\in\partial G\}$ between the point $x$ and the domain boundary $\partial G$.

The following inequality holds in the unit ball domain $G=\B^n$:
\begin{lemma}\label{lem_jrhoine}\cite[Lemma 4.9(1), p. 61]{hkv}
For all $x,y\in\B^n$, $j_{\B^n}(x,y)\leq\rho_{\B^n}(x,y)\leq2j_{\B^n}(x,y)$.   
\end{lemma}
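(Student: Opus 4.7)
The plan is to prove both inequalities by direct computation, using the explicit formulas for the two metrics. Set $s=|x-y|$, $a=1-|x|=d_{\B^n}(x)$, $b=1-|y|=d_{\B^n}(y)$, and assume without loss of generality that $|y|\le|x|$, so that $a\le b$ and $\min\{d_{\B^n}(x),d_{\B^n}(y)\}=a$; hence $j_{\B^n}(x,y)=\log(1+s/a)$. By \eqref{q_shr},
\[
\sh^2\frac{\rho_{\B^n}(x,y)}{2}=\frac{s^2}{(1-|x|^2)(1-|y|^2)}=\frac{s^2}{a(2-a)b(2-b)}.
\]
Two elementary consequences of the triangle inequality will be used repeatedly: $b-a=|x|-|y|\le|x-y|=s$, so $b\le a+s$; and $s\le|x|+|y|\le 2|x|=2(1-a)$.

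For the lower bound $j_{\B^n}(x,y)\le\rho_{\B^n}(x,y)$, I would expand $\sh(j_{\B^n}(x,y)/2)=s/(2\sqrt{a(a+s)})$ from $e^{j/2}=\sqrt{(a+s)/a}$, so that the inequality is equivalent, after squaring, to
\[
4a(a+s)\ge a(2-a)b(2-b),\qquad\text{i.e.,}\qquad 4(a+s)\ge(2-a)\,b(2-b).
\]
This follows by bounding each factor on the right crudely: $2-a\le 2$ and $b(2-b)\le 2b\le 2(a+s)$, the last step being $b\le a+s$.

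For the upper bound $\rho_{\B^n}(x,y)\le 2j_{\B^n}(x,y)$, I would expand $\sh(j_{\B^n}(x,y))=s(2a+s)/(2a(a+s))$ and reduce the inequality $\sh(\rho/2)\le\sh(j)$ to
\[
4a(a+s)^2\le(2a+s)^2(2-a)\,b(2-b).
\]
The key observation here is that $b(2-b)-a(2-a)=(b-a)(2-a-b)\ge 0$, since $b\ge a$ and $a+b\le 2$; hence $b(2-b)\ge a(2-a)$. Applying this to the right side reduces the problem to the inequality $(2a+s)(2-a)\ge 2(a+s)$, which simplifies to $a(2-2a-s)\ge 0$, i.e.\ $s\le 2(1-a)=2|x|$; and this is exactly the triangle-inequality consequence noted above.

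The main obstacle is locating the tight chain of crude inequalities in each direction; once the right WLOG choice is made, nothing remains except algebra. The upper bound is the more delicate of the two because a naive estimate $(1-|x|^2)(1-|y|^2)\ge ab$ loses a factor that cannot be recovered, whereas the sharper lower bound $b(2-b)\ge a(2-a)$, together with the triangle-inequality bound $s\le 2|x|$, exactly closes the gap.
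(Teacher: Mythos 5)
Your argument is correct, and it is worth noting that the paper itself offers no proof of this lemma at all: it is imported verbatim from \cite[Lemma 4.9(1)]{hkv}, so any comparison is with that reference rather than with the present text. Your computation is a legitimate, self-contained verification. The reductions are sound: with $a=1-|x|\le b=1-|y|$ one indeed has ${\rm sh}(j/2)=s/(2\sqrt{a(a+s)})$ and ${\rm sh}(j)=s(2a+s)/(2a(a+s))$, and since ${\rm sh}$ is increasing both inequalities are correctly converted into the two polynomial inequalities you state. The lower bound closes via $(2-a)b(2-b)\le 4b\le 4(a+s)$ using $b-a=|x|-|y|\le s$; the upper bound closes via $b(2-b)\ge a(2-a)$ (which is $(b-a)(2-a-b)\ge 0$) followed by $2(a+s)\le(2a+s)(2-a)$, equivalent to $s\le 2|x|$, which is again the triangle inequality under your normalization $|y|\le|x|$. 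The only implicit hypotheses are $a>0$ and $s>0$ when dividing, both harmless since $x\in\B^n$ and the case $x=y$ is trivial. Compared with the textbook treatment, your proof buys complete elementarity at the cost of some opaque algebra; the standard route in \cite{hkv} organizes the same comparison through monotonicity of auxiliary functions of $|x-y|$ and the boundary distances, which generalizes more readily to other domains, but for the unit ball the two are essentially equivalent in content.
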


\begin{lemma}\label{lem_myinv} 
Suppose that the points $x,y\in\B^n\setminus\{0\}$ are non-collinear with the origin and $|x|\neq|y|$. Let $c$ be the intersection point $L(x,y)\cap L(x^*,y^*)$. Then the inversion $f:\B^n\to\B^n=f(\B^n)$ in the sphere $S^{n-1}(c,\sqrt{|c|^2-1})$ with $f(x)=y$ is given by
\begin{equation}\label{myInv}
f(z)=c+\frac{(|c|^2-1)(z-c)}{|z-c|^2},
\end{equation}
and $f$ maps the chord $L(x,y)\cap\B^n$ onto itself.
\end{lemma}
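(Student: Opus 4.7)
The plan is to work in the $2$-plane $P$ spanned by $0, x, y$ --- which contains all four of $x, y, x^*, y^*$ --- reduce everything to a single scalar parameter $t$ along $L(x,y)$, and isolate one algebraic identity from which both $|c|>1$ and $f(x)=y$ follow.

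First I would parametrize points of $L(x,y)$ as $(1-t)x+ty$ and of $L(x^*,y^*)$ as $(1-s)x^*+sy^* = (1-s)|x|^{-2}x + s|y|^{-2}y$. Since the non-collinearity hypothesis makes $x,y$ linearly independent, equating the coefficients of $x$ and $y$ in
\[
(1-t)x + ty \;=\; \frac{1-s}{|x|^2}\,x + \frac{s}{|y|^2}\,y
\]
yields $(1-t)|x|^2 = 1-s$ and $t|y|^2 = s$; adding these gives the single scalar condition $(1-t)|x|^2 + t|y|^2 = 1$. This is consistent precisely when $|x|\neq|y|$ (matching the geometric fact that $L(x,y)$ and $L(x^*,y^*)$ are parallel in the opposite case), with the unique solution $t=(|x|^2-1)/(|x|^2-|y|^2)$. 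Since $|x|^2, |y|^2 \in (0,1)$ cannot admit $1$ as a convex combination, $t\notin[0,1]$, so $t(t-1)>0$.

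Next I would derive the key identity
\[
|c|^2 - 1 \;=\; t(t-1)\,|y-x|^2,
\]
which is the main algebraic content of the lemma. It comes from expanding $|c|^2 = (1-t)^2|x|^2 + 2t(1-t)(x\cdot y) + t^2|y|^2$ and subtracting $(1-t)|x|^2 + t|y|^2 = 1$; the remaining terms collapse to $-t(1-t)\bigl(|x|^2 - 2\,x\cdot y + |y|^2\bigr)$. In particular $|c|^2>1$, so the radius $\sqrt{|c|^2-1}$ is real, and the Pythagorean relation $(|c|^2-1)+1 = |c|^2$ says that $S^{n-1}(c,\sqrt{|c|^2-1})$ meets $S^{n-1}$ orthogonally. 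Inversion in such a sphere is a standard M\"obius automorphism of $\B^n$, so $f(\B^n)=\B^n$.

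Finally, using $x-c = -t(y-x)$ (so $|x-c|^2 = t^2|y-x|^2$) together with the identity, the formula for $f$ reduces to
\[
f(x) = c - \frac{t(t-1)|y-x|^2}{t\,|y-x|^2}(y-x) = c + (1-t)(y-x) = y.
\]
The chord assertion is then automatic: $c\in L(x,y)$ by construction, and inversion in any sphere centered at $c$ fixes every line through $c$ setwise (a point $c+v$ maps to $c + r^2 v/|v|^2$, still on the same line), so $f$ preserves $L(x,y)$; combined with $f(\B^n)=\B^n$ this yields $f(L(x,y)\cap\B^n) = L(x,y)\cap\B^n$. The only genuine obstacle is the bookkeeping that isolates the identity $|c|^2-1 = t(t-1)|y-x|^2$; once it is in hand, every claim of the lemma follows essentially for free.
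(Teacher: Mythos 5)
Your proof is correct and follows essentially the same route as the paper: the paper likewise observes that $S^{n-1}(c,\sqrt{|c|^2-1})$ is orthogonal to the unit sphere (hence the inversion preserves $\B^n$), cites the standard inversion formula, and dismisses $f(x)=y$ as ``a simple calculation.'' Your write-up simply supplies the details the paper omits --- in particular the identity $|c|^2-1=t(t-1)|x-y|^2$ with $t\notin[0,1]$, which justifies that $|c|>1$ and that the radius is real, a point the paper takes for granted.
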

\begin{proof}
Since the sphere $S^{n-1}(c,\sqrt{|c|^2-1})$ is orthogonal to the unit ball for all $c\in\R^n$ with $|c|>1$, the inversion $f$ preserves the unit disk. The formula for an inversion in the sphere is given in \cite[(2), p. 26]{hkv}. Verifying that $f(x)=y$ is a simple calculation.
\end{proof}

\begin{remark}\label{r_dim}
The distances $h_{\B^n}(x,y)$, $\rho_{\B^n}(x,y)$, and $j_{\B^n}(x,y)$ only depend on how $x,y$ are fixed on the intersection of $\B^n$ and the two-dimensional plane containing these two points and the origin, so we can fix $n=2$ without loss of generality when studying these metrics.
\end{remark}

\section{The Hilbert metric and the hyperbolic metric}

In this section we prove Theorem \ref{thm_vuo}, give another definition for the Hilbert metric in the unit ball (Corollary \ref{cor_csh}) and present the inequality for the Hilbert metric and the hyperbolic metric (Corollary \ref{cor_irhoc}).

\begin{proposition}\label{prop_xyc}
For all $x,y\in\B^n$ such that $|x|=|y|$,
\begin{align*}
h_{\B^n}(x,y)=2\log\left(\frac{\sqrt{4(1-|x|^2)+|x-y|^2}+|x-y|}{\sqrt{4(1-|x|^2)+|x-y|^2}-|x-y|}\right).   
\end{align*}
\end{proposition}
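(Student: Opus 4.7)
The plan is to exploit the symmetry coming from $|x|=|y|$ and compute the cross-ratio directly. By Remark \ref{r_dim}, I may assume $n=2$, so that everything happens in a plane through the origin and on the chord $L(x,y)\cap\B^2$.

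When $|x|=|y|$, the chord $[x,y]$ is perpendicular to the radius through its midpoint $m=(x+y)/2$, and I would denote by $m_1=|m|$ the Euclidean distance from the origin to the line $L(x,y)$. The Pythagorean identity on the right triangle with legs $m_1$ and $s:=|x-y|/2$ and hypotenuse $|x|$ gives
\begin{equation*}
m_1^2=|x|^2-\tfrac{1}{4}|x-y|^2,
\end{equation*}
and then the half-length of the chord cut out by $\B^2$ is $t:=\sqrt{1-m_1^2}=\tfrac{1}{2}\sqrt{4(1-|x|^2)+|x-y|^2}$.

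I would now set up a one-dimensional coordinate $\tau$ along the chord with $\tau=0$ at $m$, so that $x,y$ correspond to $\tau=\pm s$ and the endpoints $u,v\in\partial\B^2$ correspond to $\tau=\mp t$ (with $u$ chosen, per the convention in \eqref{q_hil}, so that $|u-x|<|u-y|$, i.e.\ $u$ and $x$ on the same side of $m$). Then the four pairwise distances entering the cross-ratio are
\begin{equation*}
|u-x|=t-s,\quad |u-y|=t+s,\quad |x-v|=t+s,\quad |y-v|=t-s,
\end{equation*}
so that by \eqref{q_crs},
\begin{equation*}
|u,x,y,v|=\frac{|u-y|\,|x-v|}{|u-x|\,|y-v|}=\left(\frac{t+s}{t-s}\right)^2.
\end{equation*}

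Taking the logarithm yields $h_{\B^n}(x,y)=2\log\frac{t+s}{t-s}$, and substituting the explicit values of $s$ and $t$ (and clearing the factor $\tfrac{1}{2}$ from numerator and denominator) gives the stated formula. The proof is essentially a direct computation; the only thing that requires a moment's care is the bookkeeping of which intersection point is $u$ and which is $v$, but the symmetry $|x|=|y|$ means that swapping them replaces the cross-ratio by its reciprocal, so the squared form $((t+s)/(t-s))^2$ arises unambiguously.
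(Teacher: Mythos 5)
Your proof is correct and follows essentially the same route as the paper's: identify the midpoint $q=(x+y)/2$ as the foot of the perpendicular from the origin, compute $|q|^2=|x|^2-|x-y|^2/4$ and the half-chord length $\sqrt{1-|q|^2}$, and evaluate the cross-ratio as the square $\bigl((t+s)/(t-s)\bigr)^2$ by symmetry. No substantive differences.
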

\begin{proof}
Let $q=(x+y)/2$. Now, the distance from the origin to the line $L(x,y)$ is $|q|=\sqrt{|x|^2-|x-y|^2/4}$. Because of the symmetry due to the equality $|x|=|y|$, we have $|u-q|=|v-q|=\sqrt{1-|q|^2}=\sqrt{1-|x|^2+|x-y|^2/4}$ for $u,v\in L(x,y)\cap S^{n-1}$ such that $|x-u|<|y-u|$. It follows that
\begin{align*}
h_{\B^n}(x,y)
&=\log\frac{|u-y||x-v|}{|u-x||y-v|}
=\log\left(\left(\frac{|u-q|+|x-y|/2}{|u-q|-|x-y|/2}\right)^2\right)\\
&=2\log\left(\frac{\sqrt{4(1-|x|^2)+|x-y|^2}+|x-y|}{\sqrt{4(1-|x|^2)+|x-y|^2}-|x-y|}\right)
\end{align*}
\end{proof}

\begin{lemma}\label{lem_finv}
For all $x,y\in\B^n$ with $|x|\neq|y|$, there is an inversion $f:\B^n\to\B^n=f(\B^n)$ such that $|f(x)|=|f(y)|$ and $f(L(x,y)\cap\B^n)=L(x,y)\cap\B^n$, under which the distance $h_{\B^n}(x,y)$ is invariant.  
\end{lemma}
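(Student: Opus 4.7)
The plan is to reduce to $n=2$ via Remark \ref{r_dim} and then exhibit the desired $f$ as a member of the family of inversions appearing in Lemma \ref{lem_myinv}. For any $c\in L(x,y)$ with $|c|>1$, the map
\[
f_c(z) = c + \frac{(|c|^2-1)(z-c)}{|z-c|^2}
\]
preserves $\B^n$ (the sphere of inversion is orthogonal to $S^{n-1}$) and fixes $L(x,y)$ setwise (since $c\in L(x,y)$), and therefore sends the chord $L(x,y)\cap\B^n$ onto itself. It will then suffice to choose $c$ so that $|f_c(x)|=|f_c(y)|$.

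To carry this out, let $m$ denote the foot of the perpendicular from $0$ to $L(x,y)$, and parametrize $L(x,y)$ by signed Euclidean distance from $m$, writing $x, y, c$ at signed parameters $s_x, s_y, c_1$; set $s_0 := \sqrt{1-|m|^2}$, so that $|s_x|,|s_y|<s_0$, while $|c|>1$ becomes $|c_1|>s_0$. A direct computation shows that $f_c$ acts on the chord as $s \mapsto c_1 + (c_1^2-s_0^2)/(s-c_1)$, and since $|w|^2 = |m|^2+s_w^2$ for any point of $L(x,y)$, the condition $|f_c(x)|=|f_c(y)|$ reduces (the alternative $f_c(s_x)=f_c(s_y)$ being excluded by injectivity and $x\ne y$) to the equation
\[
2c_1(s_x s_y + s_0^2) = (c_1^2 + s_0^2)(s_x + s_y),
\]
a quadratic in $c_1$. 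The hypothesis $|x|\ne|y|$ gives $s_x+s_y\ne 0$, and the identity $s_x s_y + s_0^2 \pm s_0(s_x+s_y) = (s_0\pm s_x)(s_0\pm s_y) > 0$ shows this quadratic has two real roots whose product equals $s_0^2$ by Vieta's formulas; exactly one of them satisfies $|c_1|>s_0$, which furnishes the required $c$.

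Once $f := f_c$ is so chosen, the invariance of $h_{\B^n}(x,y)$ under $f$ is automatic from the M\"obius-invariance of the cross-ratio: writing $u, v$ for the endpoints of the chord, $f$ permutes $\{u,v\}$, and
\[
h_{\B^n}(f(x), f(y)) = \log|f(u), f(x), f(y), f(v)| = \log|u,x,y,v| = h_{\B^n}(x,y).
\]
The main obstacle is the bookkeeping in the middle step, namely deriving the quadratic cleanly and verifying that precisely one of its roots lies outside $[-s_0, s_0]$; the displayed factorization is the decisive simplification.
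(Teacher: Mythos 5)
Your proof is correct, but it takes a genuinely different route from the paper's. The paper constructs the inversion conceptually: it takes $w$ to be the \emph{hyperbolic} midpoint of $[x,y]$ and $m$ the Euclidean midpoint of the chord, applies Lemma \ref{lem_myinv} to the pair $(w,m)$ to get an inversion with $f(w)=m$, and then invokes the conformal invariance of $\rho_{\B^n}$ to conclude $\rho_{\B^n}(f(x),m)=\rho_{\B^n}(m,f(y))$ and hence $|f(x)|=|f(y)|$; the cross-ratio step at the end is the same as yours. You instead bypass the hyperbolic metric entirely and solve for the inversion centre $c$ directly: parametrizing the chord by signed distance from its midpoint, the condition $|f_c(x)|=|f_c(y)|$ becomes the quadratic $2c_1(s_xs_y+s_0^2)=(c_1^2+s_0^2)(s_x+s_y)$, and your factorization $s_xs_y+s_0^2\pm s_0(s_x+s_y)=(s_0\pm s_x)(s_0\pm s_y)>0$ shows $Q(-s_0)>0>Q(s_0)$, so one root lies in $(-s_0,s_0)$ and, since the product of the roots is $s_0^2$, exactly one root satisfies $|c_1|>s_0$, i.e.\ $|c|>1$. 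I checked the computation of the induced map $s\mapsto c_1+(c_1^2-s_0^2)/(s-c_1)$ and the derivation of the quadratic; both are right, and $|x|\neq|y|$ does give $s_x+s_y\neq 0$ so the equation is genuinely quadratic. Your approach is more computational but also more self-contained and uniform: it does not need Lemma \ref{lem_myinv}'s construction $c=L(w,m)\cap L(w^*,m^*)$, which degenerates when the relevant points are collinear with the origin (a case the paper's proof quietly skirts), and it produces the centre $c$ explicitly. The paper's proof is shorter and explains \emph{why} such an inversion exists (it is the M\"obius map carrying the hyperbolic midpoint to the Euclidean chord midpoint), at the cost of leaning on conformal invariance of the hyperbolic metric. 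The only place where you are slightly terse is the final cross-ratio identity: one should note that $f$ reverses orientation on the chord (its derivative there is $-(c_1^2-s_0^2)/(s-c_1)^2<0$), so $f$ swaps $u$ and $v$ and $f(u)$ is again the endpoint nearer to $f(x)$, which is exactly what makes $h_{\B^n}(f(x),f(y))=\log|f(u),f(x),f(y),f(v)|$ the correctly ordered cross-ratio rather than its reciprocal.
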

\begin{proof}
Let then $u,v\in L(x,y)\cap S^{n-1}$ so that the points $u,x,y,v$ occur in this order on the line $L(x,y)$. Let $m=(u+v)/2$ and $w$ such a point on $[x,y]$ that $\rho_{\B^n}(x,w)=\rho_{\B^n}(w,y)$. Let $f$ be the inversion in the sphere $S^{n-1}(c,\sqrt{|c|^2-1})$, where $c=L(w,m)\cap L(w^*,m^*)$. It follows from Lemma \ref{lem_myinv} that the hyperbolic segment $J^*[x,y]$ is preserved under the inversion and $f(w)=m$. By the conformal invariance of the hyperbolic metric, it follows that $\rho_{\B^n}(f(x),m)=\rho_{\B^n}(m,f(y))$ and therefore $|f(x)|=|f(y)|$. Since $c\in L(x,y)$, we have $f(L(x,y))=L(x,y)$ but the points $u,x,y,v$ are in the opposite order after the inversion. By the invariance of the cross-ratio under inversions,
\begin{align*}
h_{\B^n}(x,y)&=\log|u,x,y,v|
=\log|f(u),f(x),f(y),f(v)|
=\log|v,f(x),f(y),u|\\
&=h_{\B^n}(f(x),f(y)).
\end{align*}
\end{proof}

\begin{nonsec}
\textbf{Proof of Theorem \ref{thm_vuo}.}
\end{nonsec}
\begin{proof}
We can assume that $|x|=|y|$ without loss of generality because both metrics are invariant under the inversion $f$ of Lemma \ref{lem_finv}. Let $d=|x-y|/2$. By the proof of Proposition \ref{prop_xyc},
\begin{align*}
h_{\B^n}(x,y)=2\log\left(\frac{\sqrt{1-m_1^2}+d}{\sqrt{1-m_1^2}-d}\right).
\end{align*}
If we denote $E=e^{h_{\B^n}(x,y)/2}$,
\begin{align*}
&E=\left(\frac{\sqrt{1-m_1^2}+d}{\sqrt{1-m_1^2}-d}\right)
\quad\Leftrightarrow\quad
d=\frac{E-1}{E+1}\sqrt{1-m_1^2}\\
&\Leftrightarrow\quad
1-d^2-m_1^2=(1-m_1^2)\frac{(E+1)^2-(E-1)^2}{(E+1)^2}
\end{align*}
By \cite[4.16]{hkv},
\begin{align*}
{\rm sh}\frac{\rho_{\B^n}(x,y)}{2}
=\frac{2d}{1-d^2-m_1^2}
=\frac{E^2-1}{2E\sqrt{1-m_1^2}}
=\frac{1}{\sqrt{1-m_1^2}}\,{\rm sh}\frac{h_{\B^n}(x,y)}{2}.
\end{align*}
\end{proof}

\begin{remark}\label{r_findm1}
As shown below in the proof of Corollary \ref{cor_csh}, the distance $m_1$ from the origin to the line $L(x,y)$ in Theorem \ref{thm_vuo} is
\begin{align*}
m_1=\frac{\sqrt{|x|^2|y|^2-(x\cdot y)^2}}{|x-y|}.    
\end{align*}
By \cite[(B11), p. 460]{hkv}, the point closest to the origin on the line $L(x,y)$ for two points $x,y\in\B^2$ is
\begin{align*}
\frac{\overline{x}y-x\overline{y}}{2(\overline{x}-\overline{y})}.   
\end{align*}
Consequently, in the case $n=2$, we can also write $m_1$ as
\begin{align}\label{q_m1f}
m_1=\frac{|\overline{x}y-x\overline{y}|}{2|x-y|}.
\end{align}
\end{remark}

\begin{corollary}\label{cor_csh}
For all $x,y\in\B^n$,
 \begin{align*}
{\rm ch}\frac{h_{\B^n}(x,y)}{2}=\frac{1-x\cdot y}{\sqrt{(1-|x|^2)(1-|y|^2)}} 
\end{align*}    
\end{corollary}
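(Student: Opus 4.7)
The plan is to square the identity of Theorem \ref{thm_vuo}, apply the hyperbolic identity $\mathrm{ch}^2 t = 1 + \mathrm{sh}^2 t$, substitute the formula \eqref{q_shr} for $\mathrm{sh}(\rho_{\B^n}(x,y)/2)$, and simplify algebraically. The only auxiliary ingredient needed is the explicit formula for $m_1$ in terms of $x$ and $y$, which is the claim also advertised in Remark \ref{r_findm1}.

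First I would compute $m_1$ by a direct projection. Parametrizing the line $L(x,y)$ as $x+t(y-x)$ and minimizing $|x+t(y-x)|^2$ gives $t=\frac{x\cdot(x-y)}{|x-y|^2}$, whence
\begin{align*}
m_1^2=\frac{|x|^2|x-y|^2-(x\cdot(x-y))^2}{|x-y|^2}=\frac{|x|^2|y|^2-(x\cdot y)^2}{|x-y|^2},
\end{align*}
the last step being a routine expansion. This is exactly the formula promised in Remark \ref{r_findm1}.

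Next, combining Theorem \ref{thm_vuo} with \eqref{q_shr} yields
\begin{align*}
\mathrm{ch}^2\frac{h_{\B^n}(x,y)}{2}=1+(1-m_1^2)\,\frac{|x-y|^2}{(1-|x|^2)(1-|y|^2)}.
\end{align*}
Placing everything over the common denominator $(1-|x|^2)(1-|y|^2)$, the numerator becomes
\begin{align*}
(1-|x|^2)(1-|y|^2)+|x-y|^2-\bigl(|x|^2|y|^2-(x\cdot y)^2\bigr),
\end{align*}
after substituting $m_1^2$. Expanding $(1-|x|^2)(1-|y|^2)=1-|x|^2-|y|^2+|x|^2|y|^2$ and $|x-y|^2=|x|^2-2x\cdot y+|y|^2$, the $|x|^2$, $|y|^2$ and $|x|^2|y|^2$ terms cancel, leaving $1-2x\cdot y+(x\cdot y)^2=(1-x\cdot y)^2$.

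The main (and only) obstacle is keeping track of signs during the algebraic collapse, but nothing subtle happens: since $|x|,|y|<1$ we have $1-x\cdot y>0$ and $\mathrm{ch}(h_{\B^n}(x,y)/2)>0$, so taking the positive square root of
\begin{align*}
\mathrm{ch}^2\frac{h_{\B^n}(x,y)}{2}=\frac{(1-x\cdot y)^2}{(1-|x|^2)(1-|y|^2)}
\end{align*}
produces the stated identity.
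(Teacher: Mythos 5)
Your proposal is correct and follows essentially the same route as the paper: both derive $m_1^2=(|x|^2|y|^2-(x\cdot y)^2)/|x-y|^2$ (you via projection, the paper via the triangle-area formula), combine Theorem \ref{thm_vuo} with \eqref{q_shr}, and pass from ${\rm sh}$ to ${\rm ch}$ using the identity ${\rm ch}^2t=1+{\rm sh}^2t$ (which the paper phrases as ${\rm ch}({\rm arsh}\,t)=\sqrt{t^2+1}$). The algebraic simplification to $(1-x\cdot y)^2$ is identical in substance.
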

\begin{proof}
Let $m_1$ be the Euclidean distance $m_1$ from the origin to the line $L(x,y)$. If $x$ or $y$ is 0, then $m_1=0$ and the result follows directly from Theorem \ref{thm_vuo} and \eqref{q_shr}. For two distinct points $x,y\in\B^n\setminus\{0\}$, $m_1$ is the height of the triangle with vertices $0,x,y$ and base $[x,y]$. By elementary geometry related to the area of a triangle, we have
\begin{align*}
&\frac{1}{2}m_1|x-y|=\frac{1}{2}\sqrt{|x|^2|y|^2-(x\cdot y)^2}
\quad\Leftrightarrow\quad
m_1=\frac{\sqrt{|x|^2|y|^2-(x\cdot y)^2}}{|x-y|}\\
&\Leftrightarrow\quad
\sqrt{1-m_1^2}=\frac{\sqrt{(1-x\cdot y)^2-(1-|x|^2)(1-|y|^2)}}{|x-y|}.
\end{align*}
From Theorem \ref{thm_vuo} and \eqref{q_shr}, it now follows that
\begin{align*}
{\rm sh}\frac{h_{\B^n}(x,y)}{2}
=\sqrt{1-m_1^2}\,{\rm sh}\frac{\rho_{\B^n}(x,y)}{2}
=\sqrt{\frac{(1-x\cdot y)^2}{(1-|x|^2)(1-|y|^2)}-1}.
\end{align*}
Since
\begin{align*}
{\rm ch}({\rm arsh}\,t)
={\rm ch}(\log(t+\sqrt{t^2+1}))
=\frac{(t+\sqrt{t^2+1})^2+1}{2(t+\sqrt{t^2+1})}
=\sqrt{t^2+1},
\end{align*}
our result follows.
\end{proof}

\begin{remark}
By \cite[(1.4), p. 56]{b99}, the Klein metric $k$ is defined as
\begin{align*}
{\rm ch}\,k_{\B^n}(x,y)=\frac{1-x\cdot y}{\sqrt{(1-|x|^2)(1-|y|^2)}}. 
\end{align*}
In \cite[(1.6), p. 157]{b99}, this definition is erroneously claimed to fulfill $k_{\B^n}(x,y)=\log|u,x,y,v|$ where the points $u$ and $v$ are as in \eqref{q_hil} for $G=\B^n$. This would mean that the metrics $k_{\B^n}$ and $h_{\B^n}$ are equal in the unit ball. However, as can be seen in Corollary \ref{cor_csh}, we need to have the number 2 as a denominator inside the hyperbolic cosine above so that the equality holds.
\end{remark}

\begin{corollary}
For all $x,y\in\B^n$,
\begin{align*}
{\rm ch}\frac{h_{\B^n}(x,y)}{2}=\frac{1-x\cdot y}{|x-y|}\,{\rm sh}\,\frac{\rho_{\B^n}(x,y)}{2}.    
\end{align*}    
\end{corollary}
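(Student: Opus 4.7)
The plan is to observe that this corollary is essentially a rewriting of Corollary \ref{cor_csh} using the definition \eqref{q_shr} of the hyperbolic metric, so no genuine new work is required. The main obstacle is simply making sure the identification is correctly bookkept; there are no analytical difficulties.

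More concretely, I would start from the formula established in Corollary \ref{cor_csh},
\begin{align*}
{\rm ch}\frac{h_{\B^n}(x,y)}{2}=\frac{1-x\cdot y}{\sqrt{(1-|x|^2)(1-|y|^2)}},
\end{align*}
and rewrite the right-hand side by multiplying and dividing by $|x-y|$:
\begin{align*}
\frac{1-x\cdot y}{\sqrt{(1-|x|^2)(1-|y|^2)}}
=\frac{1-x\cdot y}{|x-y|}\cdot\frac{|x-y|}{\sqrt{(1-|x|^2)(1-|y|^2)}}.
\end{align*}
By \eqref{q_shr}, the second factor is precisely ${\rm sh}\frac{\rho_{\B^n}(x,y)}{2}$, which yields the claimed identity. (If $x=y$, both sides vanish trivially after interpreting the indeterminate form, or one may restrict to $x\ne y$ since $h_{\B^n}(x,x)=\rho_{\B^n}(x,x)=0$ by convention.)

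In short, the proof is a one-line manipulation: factor $|x-y|$ out of the denominator in Corollary \ref{cor_csh} and recognize the resulting quotient as $\sinh(\rho_{\B^n}(x,y)/2)$ via \eqref{q_shr}. I would present it as such, without attempting any geometric reinterpretation, since the statement is really just a repackaging that will be convenient for later comparisons between $h_{\B^n}$ and $\rho_{\B^n}$.
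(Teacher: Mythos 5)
Your proposal is correct and follows exactly the paper's route: the paper's proof is simply ``Follows from \eqref{q_shr} and Corollary \ref{cor_csh},'' and your multiplication-and-division by $|x-y|$ is just that argument written out explicitly. No issues.
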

\begin{proof}
Follows from \eqref{q_shr} and Corollary \ref{cor_csh}.    
\end{proof}

\begin{corollary}\label{cor_irhoc}
For all $x,y\in\B^n$, $h_{\B^n}(x,y)\leq\rho_{\B^n}(x,y)$. The equality here holds if and only if $x,y$ are collinear with the origin. There is no $c\in\R$ such that $\rho_{\B^n}(x,y)\leq c\cdot h_{\B^n}(x,y)$ for all $x,y\in\B^n$. 
\end{corollary}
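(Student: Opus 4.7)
The plan is to read both claims off Theorem \ref{thm_vuo} and \eqref{q_shr}. Since $L(x,y)$ meets $\B^n$ in two boundary points, the Euclidean distance $m_1$ from $0$ to $L(x,y)$ lies in $[0,1)$, so $\sqrt{1-m_1^2}\in(0,1]$.

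First I would derive the inequality. Theorem \ref{thm_vuo} gives
\[
{\rm sh}\frac{h_{\B^n}(x,y)}{2}=\sqrt{1-m_1^2}\,{\rm sh}\frac{\rho_{\B^n}(x,y)}{2}\le{\rm sh}\frac{\rho_{\B^n}(x,y)}{2},
\]
and the monotonicity of ${\rm sh}$ on $[0,\infty)$ then yields $h_{\B^n}(x,y)\le\rho_{\B^n}(x,y)$. Equality forces either $x=y$ (trivial) or $m_1=0$; the latter means the origin lies on $L(x,y)$, i.e.\ $x,y,0$ are collinear.

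For the second assertion the plan is to exhibit a sequence $(x_k,y_k)$ along which $m_1\to 1$ while $\rho_{\B^n}(x_k,y_k)$ stays bounded away from $0$; Theorem \ref{thm_vuo} then forces $h_{\B^n}(x_k,y_k)\to 0$, so the ratio $\rho_{\B^n}/h_{\B^n}$ is unbounded. By Remark \ref{r_dim} I may work in $\B^2$, and a convenient candidate is
\[
x_k=(1-1/k,\,1/k),\qquad y_k=(1-1/k,\,-1/k),
\]
for which $L(x_k,y_k)$ is vertical at distance $m_1=1-1/k$ from $0$, $|x_k-y_k|=2/k$ and $1-|x_k|^2=1-|y_k|^2=2/k-2/k^2$. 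A short calculation with \eqref{q_shr} gives ${\rm sh}^2(\rho_{\B^n}(x_k,y_k)/2)=(1-1/k)^{-2}\to 1$, so $\rho_{\B^n}(x_k,y_k)\to 2\log(1+\sqrt{2})>0$, while Theorem \ref{thm_vuo} yields ${\rm sh}^2(h_{\B^n}(x_k,y_k)/2)=(1-m_1^2)(1-1/k)^{-2}\to 0$.

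No step is a genuine obstacle; the only point needing judgement is picking a sequence whose parameters $1-m_1^2$ and $|x_k-y_k|^2$ vanish at matching rates, so that $\rho_{\B^n}(x_k,y_k)$ does not collapse together with $h_{\B^n}(x_k,y_k)$.
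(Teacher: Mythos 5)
Your proposal is correct and follows exactly the route the paper intends: the paper's proof is the single line ``Follows from Theorem \ref{thm_vuo},'' and your argument simply supplies the omitted details (monotonicity of ${\rm sh}$, the equality case $m_1=0$, and an explicit sequence with $m_1\to1$ and $\rho_{\B^n}$ bounded below to rule out a reverse constant). Your computation for $x_k=(1-1/k,1/k)$, $y_k=(1-1/k,-1/k)$ checks out.
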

\begin{proof}
Follows from Theorem \ref{thm_vuo}.
\end{proof}

\section{Inequalities}

In this section, we first give a sharp inequality between the Hilbert metric and the distance ratio metric in Theorem \ref{thm_kji}. Then, in Theorem \ref{thm_1tur}, we present an inequality for the Hilbert metric between two points $x,y\in\B^n$ that can be obtained by rotating the point closer to the origin around the other point. In Theorem \ref{thm_q}, we offer the inequality for the Hilbert metric that follows by rotating the points $x,y\in\B^n$ around their midpoint. This \emph{Euclidean midpoint rotation} was originally formulated for another hyperbolic-type metric called the triangular ratio metric in \cite{sinb}.

\begin{theorem}\label{thm_kji}
For all $x,y\in\B^n$, $h_{\B^n}(x,y)\leq2j_{\B^n}(x,y)$. With the exception of the trivial case where $x=y$ and $h_{\B^n}(x,y)=j_{\B^n}(x,y)=0$, the equality $h_{\B^n}(x,y)=2j_{\B^n}(x,y)$ holds if and only if $x=-y$. There is no $c\in\R$ such that $j_{\B^n}(x,y)\leq c\cdot h_{\B^n}(x,y)$ for all $x,y\in\B^n$.   
\end{theorem}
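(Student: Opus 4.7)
The plan for the upper bound is to split the Hilbert distance into two one-sided terms along the chord and bound each separately. Let $u,v\in L(x,y)\cap S^{n-1}$ be labelled so that the four points occur in the order $u,x,y,v$. Using $|u-y|=|u-x|+|x-y|$ and $|x-v|=|x-y|+|y-v|$, the cross-ratio factors as
\begin{align*}
h_{\B^n}(x,y)=\log\!\Big(1+\tfrac{|x-y|}{|u-x|}\Big)+\log\!\Big(1+\tfrac{|x-y|}{|y-v|}\Big).
\end{align*}
Since $u,v\in S^{n-1}$, we have $|u-x|\ge 1-|x|=d_{\B^n}(x)$ and $|y-v|\ge 1-|y|=d_{\B^n}(y)$. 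Replacing both denominators by $\min\{1-|x|,1-|y|\}$ then gives $h_{\B^n}(x,y)\le 2j_{\B^n}(x,y)$.

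For the equality statement, all three inequalities above must be tight. The first two force $u=x/|x|$ and $v=y/|y|$ (the unique nearest points of $S^{n-1}$ to $x$ and $y$ respectively), and the third forces $|x|=|y|$. The remaining geometric step is that $x/|x|$ and $y/|y|$ can simultaneously lie on $L(x,y)$ only if this chord passes through the origin: the lines $L(0,x/|x|)$ and $L(x/|x|,y/|y|)$ already share the point $x/|x|$, and $x$ belongs to both, so if they were distinct they would meet only at $x/|x|$, contradicting $|x|<1$. Hence $y/|y|=-x/|x|$, which together with $|x|=|y|$ yields $y=-x$. Conversely, $h_{\B^n}(x,-x)=2\log\frac{1+|x|}{1-|x|}=2j_{\B^n}(x,-x)$ by direct computation. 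I expect this equality analysis, and specifically the little two-lines argument, to be the main obstacle; the two bounding inequalities above are immediate.

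Finally, to rule out any reverse bound $j\le ch$, the plan is to exhibit a sequence where $j/h\to\infty$. By Theorem \ref{thm_vuo}, $h$ becomes small relative to $\rho$ when the chord is near-tangent to $S^{n-1}$ (i.e.\ $m_1$ close to $1$), whereas $j$ depends only on the Euclidean distances to the boundary. Concretely, take $x_\varepsilon=(1-\varepsilon,-\varepsilon)$ and $y_\varepsilon=(1-\varepsilon,\varepsilon)$ in $\B^2$: then $d_{\B^2}(x_\varepsilon)=d_{\B^2}(y_\varepsilon)\sim\varepsilon$ and $|x_\varepsilon-y_\varepsilon|=2\varepsilon$, so $j_{\B^2}(x_\varepsilon,y_\varepsilon)$ stays bounded below, while a direct cross-ratio computation on the vertical chord $\{1-\varepsilon\}\times\R$ gives $h_{\B^2}(x_\varepsilon,y_\varepsilon)=O(\sqrt{\varepsilon})\to 0$. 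Hence $j/h\to\infty$ along this sequence, and by Remark \ref{r_dim} the construction extends to any dimension $n\ge 2$.
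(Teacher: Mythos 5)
Your proof is correct, and it takes a genuinely different route from the paper's. For the inequality $h_{\B^n}(x,y)\leq 2j_{\B^n}(x,y)$ and the non-existence of a reverse constant, the paper simply chains two earlier results, $h_{\B^n}\leq\rho_{\B^n}$ (Corollary \ref{cor_irhoc}, which rests on Theorem \ref{thm_vuo}) and $j_{\B^n}\leq\rho_{\B^n}\leq 2j_{\B^n}$ (Lemma \ref{lem_jrhoine}); it then observes that this chain cannot detect the equality case and switches to an explicit parametrization $x=1+k_0(e^{\mu i}-1)$, $y=1+k_1(e^{\mu i}-1)$, proving that $j_{\B^2}$ is monotone in $\mu$, reducing to the diameter case $\mu=\pi$, and computing the ratio $h/j$ there. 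Your factorization of the cross-ratio into the two one-sided terms $\log(1+|x-y|/|u-x|)$ and $\log(1+|x-y|/|y-v|)$, each bounded via $|u-x|\geq 1-|x|=d_{\B^n}(x)$ and $|y-v|\geq 1-|y|=d_{\B^n}(y)$, gives the inequality \emph{and} the equality analysis in one self-contained elementary argument, with no appeal to the hyperbolic metric or to Theorem \ref{thm_vuo}; the two-lines argument identifying when the nearest boundary points to $x$ and to $y$ can both lie on $L(x,y)$ is sound (and the degenerate case $x=0$ or $y=0$ is automatically excluded in the nontrivial equality case, since $|x|=|y|$ with $x\neq y$ forces both points to be nonzero — worth a sentence). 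Your explicit near-tangent sequence $x_\varepsilon=(1-\varepsilon,-\varepsilon)$, $y_\varepsilon=(1-\varepsilon,\varepsilon)$ with $h=O(\sqrt{\varepsilon})$ and $j$ bounded below is also a valid, more concrete replacement for the paper's indirect argument for the last claim. What the paper's approach buys is economy given the machinery it has already built; what yours buys is independence from that machinery and a cleaner, unified treatment of the sharpness statement.
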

\begin{proof}
The inequality $h_{\B^n}(x,y)\leq2j_{\B^n}(x,y)$ follows from Lemma \ref{lem_jrhoine} and Corollary \ref{cor_irhoc}. Similarly, it also follows from these results that there cannot be such $c\in\R$ that $j_{\B^n}(x,y)\leq c\cdot h_{\B^n}(x,y)$ for all $x,y\in\B^n$. However, to prove the rest of our theorem, we need to consider the relation between the distances $h_{\B^n}(x,y)$ and $j_{\B^n}(x,y)$ without the hyperbolic metric. 

By Remark \ref{r_dim}, let $n=2$. Because both metrics are invariant under rotations around the origin and reflections in lines that pass through the origin, we can assume that $u=1$ and $\mu={\rm Arg}(v)\in(0,\pi]$ when $u,v\in L(x,y)\cap S^1$ so that $u\neq v$ and $|x-u|<|y-u|$. For some $0<k_0<k_1<1$, $x=1+k_0(e^{\mu i}-1)$ and $y=1+k_1(e^{\mu i}-1)$. See Figure \ref{fig1}. We have now
\begin{align}\label{q_ckfork01}
h_{\B^2}(x,y)=\log\frac{k_1(1-k_0)}{k_0(1-k_1)}    
\end{align}
for all values of $\mu$. Denote
\begin{align*}
W(\mu,k)=\sqrt{k^2+(1-k)^2+2k(1-k)\cos(\mu)}.    
\end{align*}
We have
\begin{align*}
j_{\B^2}(x,y)=\log\left(1+\frac{\sqrt{2}(k_1-k_0)\sqrt{1-\cos(\mu)}}{1-\max\{W(\mu,k_0),W(\mu,k_1)\}}\right). 
\end{align*}
By differentiation,
\begin{align*}
&\frac{\partial}{\partial\cos(\mu)}\left(\frac{\sqrt{1-\cos(\mu)}}{1-W(\mu,k)}\right)\\
&=\frac{-W(\mu,k)+k^2+(1-k)^2+2k(1-k)\cos(\mu)}{2\sqrt{1-\cos(\mu)}W(\mu,k)(1-W(\mu,k))^2}
+\frac{2k(1-k)(1-\cos(\mu))}{2\sqrt{1-\cos(\mu)}W(\mu,k)(1-W(\mu,k))^2}\\
&=\frac{1}{2\sqrt{1-\cos(\mu)}W(\mu,k)(1-W(\mu,k))}.
\end{align*}
for $k\in\{k_0,k_1\}$. Consequently, the distance $j_{\B^2}(x,y)$ is increasing with respect to $\cos(\mu)\in[-1,1)$ or, equivalently, decreasing with respect to $\mu\in(0,\pi]$. 

Fix now $\mu=\pi$ so that the distance $j_{\B^2}(x,y)$ is at minimum with respect to $\mu$. Now,
\begin{align*}
|x-y|=2(k_1-k_0),\quad
|x|=|1-2k_0|,\quad
|1-2k_1|=|y|,
\end{align*}
from which follows that
\begin{align*}
\log\left(1+\frac{2(k_1-k_0)}{1-\max\{|1-2k_0|,|1-2k_1|\}}\right). 
\end{align*}
We have $\max\{|1-2k_0|,|1-2k_1|\}=1-2k_0$ if either $k_0<1/2<k_1$ and $k_0+k_1\leq1$ or $k_0<k_1\leq1/2$, and $\max\{|1-2k_0|,|1-2k_1|\}=2k_1-1$ if either $k_0<1/2<k_1$ and $k_0+k_1>1$ or $1/2\leq k_0<k_1$ instead. By \eqref{q_ckfork01},
\begin{align*}
\frac{h_{\B^2}(x,y)}{j_{\B^2}(x,y)}
=\frac{\log\left(\dfrac{k_1(1-k_0)}{k_0(1-k_1)}\right)}{\max\left\{\log\left(\dfrac{k_1}{k_0}\right),\log\left(\dfrac{1-k_0}{1-k_1}\right)\right\}}
=\frac{\log\left(\dfrac{k_1}{k_0}\right)+\log\left(\dfrac{1-k_0}{1-k_1}\right)}{\max\left\{\log\left(\dfrac{k_1}{k_0}\right),\log\left(\dfrac{1-k_0}{1-k_1}\right)\right\}}
\leq2,
\end{align*}
where the equality holds if and only if $k_1/k_0=(1-k_0)/(1-k_1)$ or, equivalently, $k_0+k_1=1$. If $k_0+k_1=1$, then $|x|=|1-2k_0|=|1-2k_1|=|y|$. Since $\mu=\pi$ and $x\neq y$, it follows from $|x|=|y|$ that $x=-y$.
\end{proof}

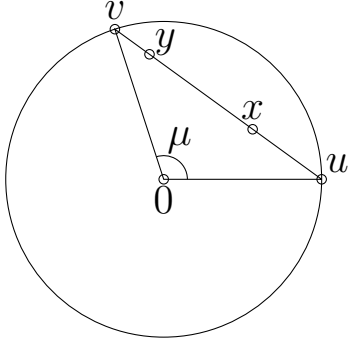
\begin{figure}[ht]
    \centering
    \begin{tikzpicture}[scale=2.1]
    \draw (0,0) circle (1cm);
    \draw (1,0) -- (0,0) -- (-0.309,0.951) -- (1,0);
    \draw (1,0) circle (0.03cm);
    \draw (0.563,0.317) circle (0.03cm);
    \draw (-0.090,0.792) circle (0.03cm);
    \draw (-0.309,0.951) circle (0.03cm);
    \draw (0,0) circle (0.03cm);
    \draw (0.15,0) arc (0:108:0.15);
    \node[scale=1.3] at (1.1,0.1) {$u$};
    \node[scale=1.3] at (0.563,0.44) {$x$};
    \node[scale=1.3] at (0.01,0.87) {$y$};
    \node[scale=1.3] at (-0.309,1.07) {$v$};
    \node[scale=1.3] at (0.1,0.25) {$\mu$};
    \node[scale=1.3] at (0,-0.13) {0};
    \end{tikzpicture}
    \caption{The points $u=1$, $v=e^{\mu i}$, $x=u+k_0(v-u)$, and $y=u+k_1(v-u)$ as in the proof of Theorem \ref{thm_kji}, when $\mu=3\pi/5$, $k_0=1/3$, and $k_1=5/6$.}
    \label{fig1}
\end{figure}

\begin{theorem}\label{thm_1tur}
For all $x,y\in\B^n$,
\begin{align*}
&\log\frac{(1-t^2)\sqrt{|x-y|^2+1-t^2}}{(\sqrt{|x-y|^2+1-t^2}-|x-y|)(1-t^2-|x-y|(\sqrt{|x-y|^2+1-t^2}-|x-y|))}\\
&\leq h_{\B^n}(x,y)\\
&\leq\max\left\{\log\frac{(1+t)(1-t+|x-y|)}{(1-t)(1+t-|x-y|)},
2\log\left(\frac{\sqrt{4(1-t^2)+|x-y|^2}+|x-y|}{\sqrt{4(1-t^2)+|x-y|^2}-|x-y|}\right)\right\},
\end{align*}
where $t=\max\{|x|,|y|\}$.
\end{theorem}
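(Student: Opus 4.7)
The plan is to parametrize all admissible configurations by the single scalar $R:=|y|^2$. By Remark \ref{r_dim} we may take $n=2$, and by the symmetry of the statement in $x$ and $y$ we may assume $|x|=t\ge|y|$. Fixing $d:=|x-y|$, we have $x\cdot y=(t^2+R-d^2)/2$, so Corollary \ref{cor_csh} rewrites
\[
F(R):={\rm ch}^2\frac{h_{\B^n}(x,y)}{2}=\frac{(2-t^2-R+d^2)^2}{4(1-t^2)(1-R)}.
\]
The triangle inequality and the bound $|y|\le t$ confine $R$ to the interval $[(t-d)^2,t^2]$, so it remains to minimise and maximise $F$ on this interval.

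For the lower bound I will write $2-t^2-R+d^2=A+B$ with $A:=1-t^2+d^2$ and $B:=1-R$, and invoke the elementary inequality $(A+B)^2\ge 4AB$ to get $F(R)\ge A/(1-t^2)=1+d^2/(1-t^2)$. Equivalently ${\rm sh}\frac{h_{\B^n}(x,y)}{2}\ge d/\sqrt{1-t^2}$, i.e.\ $h_{\B^n}(x,y)\ge 2\,{\rm arsh}(d/\sqrt{1-t^2})$. To match the stated form, set $s:=\sqrt{d^2+1-t^2}$; then $(s-d)(s+d)=1-t^2$ and hence $1-t^2-d(s-d)=s(s-d)$, so the stated lower bound collapses to $\log((s+d)^2/(1-t^2))=2\,{\rm arsh}(d/\sqrt{1-t^2})$.

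For the upper bound I will differentiate $F$ and show that its only critical point on $(-\infty,1)$ is a minimum at $R=t^2-d^2$. Consequently $F$ is U-shaped on $[(t-d)^2,t^2]$ when $d\le t$ and strictly increasing on this interval when $d>t$; in either case the maximum of $F$ on $[(t-d)^2,t^2]$ is attained at an endpoint. At $R=(t-d)^2$ the point $y$ lies on the line through $0$ and $x$, and a calculation parallel to that of Proposition \ref{prop_xyc} (using $(1-t+d)(1+t-d)=1-(t-d)^2$) yields
\[
h_{\B^n}(x,y)=\log\frac{(1+t)(1-t+d)}{(1-t)(1+t-d)},
\]
which is the first candidate in the maximum. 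At $R=t^2$ we have $|x|=|y|=t$, and Proposition \ref{prop_xyc} directly delivers the second candidate. Taking the larger of the two finishes the proof.

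The main obstacle I foresee is the bookkeeping needed to recognise the stated lower bound as $2\,{\rm arsh}(d/\sqrt{1-t^2})$; once the identity $1-t^2-d(s-d)=s(s-d)$ is spotted the reduction takes one line, and analogous factorisations handle the two endpoint simplifications for the upper bound.
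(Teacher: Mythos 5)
Your proof is correct, and it takes a genuinely different route from the paper's. The paper parametrizes the position of $y$ on the arc $S^1(x,|x-y|)\cap\overline{\B}^2(0,|x|)$ through the boundary point $u$, computes the cross-ratio explicitly in terms of $|u-x|$, and locates the extrema by differentiating in $|u-x|$; this requires some careful bookkeeping of the admissible interval for $|u-x|$ (the quantity $c_0$ in the paper's proof). You instead feed the law of cosines $x\cdot y=(t^2+R-d^2)/2$ into Corollary \ref{cor_csh}, reducing everything to the rational function
\[
F(R)=\frac{(A+B)^2}{4(1-t^2)B},\qquad A=1-t^2+d^2,\ B=1-R,
\]
so that the lower bound is the one-line inequality $(A+B)^2\ge 4AB$ and the upper bound follows from the unique critical point $B=A$ being a minimum, forcing the maximum onto an endpoint of $[(t-d)^2,t^2]$. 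The two arguments identify the same extremal configurations ($y$ collinear with $0,x$, respectively $|y|=|x|$), and both correctly treat the lower bound as an unconstrained minimum that need not be attained on the admissible arc; your version buys a substantially shorter and more transparent computation at the cost of relying on Corollary \ref{cor_csh} (which is proved earlier in the paper, so there is no circularity), while the paper's version additionally yields the monotonicity of $h_{\B^2}(x,y)$ in $|u-x|$, which is reused later in Lemmas \ref{lem_inc} and \ref{lem_hilb}. Your endpoint identifications and the simplification of the stated lower bound to $2\,{\rm arsh}\bigl(d/\sqrt{1-t^2}\bigr)$ via $1-t^2-d(s-d)=s(s-d)$ all check out.
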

\begin{proof}
By Remark \ref{r_dim}, we can fix $n=2$. By symmetry, we can assume that $|x|=\max\{|x|,|y|\}$. Because $h_{\B^2}(x,y)$ is invariant under rotation around the origin, let us also set $x\in(0,1)$ so that $x=|x|$. Fix $u,v\in L(x,y)\cap S^1$ so that $u\neq v$ and $|x-u|<|y-u|$. Since $L(x,y)=L(x,u)$, rotating $y$ on the closed arc $S^1(x,|x-y|)\cap\overline{\B}^2(0,|x|)$ around $x$ can be done by moving the points $u$ on the unit circle.

Let $\psi={\rm Arg}(u)$ so that
\begin{align}\label{q_uxpsi}
|u-x|=||x|-e^{\psi i}|=\sqrt{1+|x|^2-2|x|\cos(\psi)}.    
\end{align}
Because $y\in L(x,u)\cap\overline{\B^2}(0,|x|)$, as can be seen from Figure \ref{fig2}, we can write $y=x+k(x-u)$ where $k=|x-y|/|u-x|$. We have now
\begin{align*}
|y|&=|x+k(x-u)|=|(1+k)|x|+ke^{\psi i}|=\sqrt{(1+k)^2|x|^2+k^2-2k(1+k)|x|\cos(\psi)}\\
&=\sqrt{(1+|x|^2-2|x|\cos(\psi))k^2+2|x|(|x|-\cos(\psi))k+|x|^2}.   \end{align*}
The condition $|y|\leq|x|$ holds if and only if
\begin{align}
&|y|^2-|x|^2=(1+|x|^2-2|x|\cos(\psi))k^2+2|x|(|x|-\cos(\psi))k\leq0\nonumber\\
&\Leftrightarrow\quad k\leq\frac{2|x|(\cos(\psi)-|x|)}{1+|x|^2-2|x|\cos(\psi)}
\quad\Leftrightarrow\quad |x-y|\leq\frac{2|x|(\cos(\psi)-|x|)}{\sqrt{1+|x|^2-2|x|\cos(\psi)}}\nonumber\\
&\Leftrightarrow\quad \cos(\psi)\geq c_0\equiv\frac{4|x|^2-|x-y|^2+|x-y|\sqrt{4(1-|x|^2)+|x-y|^2}}{4|x|}\label{q_c0l}
\end{align}

Let us then consider the point $v$. We can write $v=x+l(x-u)$ for some $l>0$. We can solve from $|v|=1$ that
\begin{align*}
&|v|^2-1=(1+|x|^2-2|x|\cos(\psi))l^2+2|x|(|x|-\cos(\psi))l-1+|x|^2=0\\
&\Leftrightarrow\quad
l=\frac{-|x|(|x|-\cos(\psi)\pm(1-|x|\cos(\psi)}{1+|x|^2-2|x|\cos(\psi)}.
\end{align*}
Since $l>0$, we need to choose
\begin{align*}
l=\frac{1-|x|^2}{1+|x|^2-2|x|\cos(\psi)}.    
\end{align*}
It follows from \eqref{q_uxpsi} that
\begin{align*}
|v-x|=l|u-x|=\frac{1-|x|^2}{\sqrt{1+|x|^2-2|x|\cos(\psi)}}
=\frac{1-|x|^2}{|u-x|}. 
\end{align*}

We will now have
\begin{align*}
h_{\B^2}(x,y)
&=\log\frac{|u-y||x-v|}{|u-x||y-v|}
=\log\frac{(|u-x|+|x-y|)|v-x|}{|u-x|(|x-v|-|x-y|)}\\
&=\log\frac{(|u-x|+|x-y|)(1-|x|^2)}{|u-x|(1-|x|^2-|x-y||u-x|)}\\
&=\log\left(1+\frac{|x-y|}{|u-x|}\right)-\log\left(1-\frac{|x-y|}{1-|x|^2}|u-x|\right).
\end{align*}
By differentiation,
\begin{align*}
&\frac{\partial}{\partial|u-x|}\left(\log\left(1+\frac{|x-y|}{|u-x|}\right)-\log\left(1-\frac{|x-y|}{1-|x|^2}|u-x|\right)\right)\nonumber\\
&=\frac{|x-y|}{1-|x|^2-|u-x||x-y|}-\frac{|x-y|}{|u-x|(|u-x|+|x-y|)}\geq0\nonumber\\
&\Leftrightarrow\quad
|u-x|\geq-|x-y|+\sqrt{|x-y|^2+1-|x|^2}
\end{align*}
The stationary point above is a minimum, at which the value of the distance $h_{\B^2}(x,y)$ is
\begin{align*}
\log\frac{(1-|x|^2)\sqrt{|x-y|^2+1-|x|^2}}{(\sqrt{|x-y|^2+1-|x|^2}-|x-y|)(1-|x|^2-|x-y|(\sqrt{|x-y|^2+1-|x|^2}-|x-y|))}.    
\end{align*}
Note that $y\in S^1(x,|x-y|)\cap\overline{\B}^2(0,|x|)$ if and only if $1-|x|\leq|u-x|\leq\sqrt{1+|x|^2-2|x|c_0}$ where $c_0$ is as \eqref{q_c0l}. Since $|u-x|=-|x-y|+\sqrt{|x-y|^2+1-|x|^2}$ is not always on the interval $[1-|x|,\sqrt{1+|x|^2-2|x|c_0}]$, the minimum above cannot be always obtained by rotating $y$ on the arc $ S^1(x,|x-y|)\cap\overline{\B}^2(0,|x|)$. However, it is always well-defined since
\begin{align*}
1-|x|^2>|x-y|(\sqrt{|x-y|^2+1-|x|^2}-|x-y|))
\quad\Leftrightarrow\quad
1-|x|^2>-|x-y|^2
\end{align*}
and can therefore be used as a lower limit for $h_{\B^2}(x,y)$. The distance $h_{\B^2}(x,y)$ is at maximum with respect to $y\in S^1(x,|x-y|)\cap\overline{\B}^2(0,|x|)$ when either $|u-x|=1-|x|$ or $|u-x|=\sqrt{1+|x|^2-2|x|c_0}$. If $|u-x|=1-|x|$,
\begin{align*}
h_{\B^2}(x,y)=\log\frac{(1+|x|)(1-|x|+|x-y|)}{(1-|x|)(1+|x|-|x-y|)}. 
\end{align*}
If $|u-x|=\sqrt{1+|x|^2-2|x|c_0}$, then $|x|=|y|$ and $h_{\B^2}(x,y)$ is given by Proposition \ref{prop_xyc}.
\end{proof}

\begin{figure}[ht]
    \centering
    \begin{tikzpicture}[scale=2.1]
    \draw (0,0) circle (1cm);
    \draw (0.866,0.5) -- (-0.214,-0.976);
    \draw (0.866,0.5) circle (0.03cm);
    \draw (0.5,0) circle (0.03cm);
    \draw (0.303,-0.268) circle (0.03cm);
    \draw (-0.214,-0.976) circle (0.03cm);
    \draw (0,0) circle (0.03cm);
    \draw (0.866,0.5) -- (0,0) -- (0.5,0);
    \draw (0.388,0.314) arc (109.4:250.5:0.333);
    \draw[dashed] (0.388,-0.314) arc (-109.4:109.4:0.333);
    \draw[dashed] (0,0) circle (0.5cm);
    \draw (0.15,0) arc (0:30:0.15);
    \node[scale=1.3] at (0.9,0.63) {$u$};
    \node[scale=1.3] at (0.63,0) {$x$};
    \node[scale=1.3] at (0.3,-0.13) {$y$};
    \node[scale=1.3] at (-0.214,-0.83) {$v$};
    \node[scale=1.3] at (-0.13,0) {0};
    \node[scale=1.3] at (0.05,0.17) {$\psi$};
    \end{tikzpicture}
    \caption{The points $x,y,u,v$ as in the proof of Theorem \ref{thm_1tur}, when $x=1/2$, $\psi={\rm Arg}(u)=\pi/6$, and $|x-y|=1/3$.}
    \label{fig2}
\end{figure}
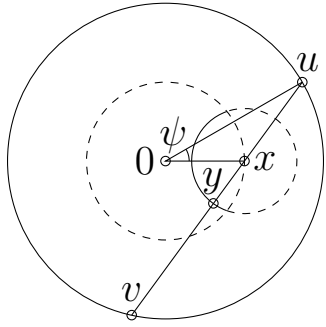

\begin{remark}
The following result related to the Euclidean midpoint rotation also holds for the triangular ratio metric \cite[Thm 5.11, p. 22 \& Thm 5.12, p. 23]{sinb} and the hyperbolic metric \cite[Rmk 4.4, p. 8 of 14]{ird}, and this result could very easily be proven for the distance ratio metric, too. 
\end{remark}

\begin{theorem}\label{thm_q}
For all $x,y\in\B^n$, the distance $h_{\B^n}(x,y)$ is decreasing under such a rotation around the point $(x+y)/2$ that increases the angle $\nu\in[0,\pi/2]$ between the lines $L(x,y)$ and $L(0,(x+y)/2)$ or, if $(x+y)/2=0$, invariant under this rotation. The inequality
\begin{align*}
2\log\left(\frac{\sqrt{4-|x+y|^2}+|x-y|}{\sqrt{4-|x+y|^2}-|x-y|}\right)\leq h_{\B^n}(x,y)   
\end{align*}
holds for all $x,y\in\B^n$ and the equality holds here if and only if $|x|=|y|$. Furthermore, if $|x-y|<2-|x+y|$, then
\begin{align*}
h_{\B^n}(x,y)\leq\log\left(\frac{(2+|x-y|)^2-|x+y|^2}{(2-|x-y|)^2-|x+y|^2}\right),   
\end{align*}
where the equality holds if and only if $x,y$ are collinear with the origin.
\end{theorem}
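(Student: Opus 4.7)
My plan is to reduce everything, via Corollary~\ref{cor_csh}, to three rotation-natural quantities: $|p|$, $|q|$, and $p\cdot q$, where I set $q=(x+y)/2$ and $p=(x-y)/2$, so $x=q+p$, $y=q-p$, $|x+y|=2|q|$, and $|x-y|=2|p|$. Direct expansion gives $1-x\cdot y=1-|q|^2+|p|^2$ and $(1-|x|^2)(1-|y|^2)=(1-|q|^2-|p|^2)^2-4(p\cdot q)^2$, so Corollary~\ref{cor_csh} produces
\begin{align*}
{\rm ch}^2\frac{h_{\B^n}(x,y)}{2}=\frac{(1-|q|^2+|p|^2)^2}{(1-|q|^2-|p|^2)^2-4(p\cdot q)^2}.
\end{align*}
The hypothesis $x,y\in\B^n$ yields $|q\pm p|<1$, hence $|q|^2+|p|^2<1$ and $4(p\cdot q)^2<(1-|q|^2-|p|^2)^2$, making the denominator positive.

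By Remark~\ref{r_dim} I work in $\B^2$. A rotation about $q$ sends $p$ to $Rp$ with $|Rp|=|p|$, so $|p|$ and $|q|$ are preserved while only $(Rp)\cdot q$ changes. In terms of the acute line-angle $\nu\in[0,\pi/2]$ between $L(x',y')$ and $L(0,q)$ one has $((Rp)\cdot q)^2=|p|^2|q|^2\cos^2\nu$, so the denominator above is strictly increasing in $\nu$ as long as $|p||q|\neq 0$. Thus ${\rm ch}^2(h_{\B^n}/2)$, and with it $h_{\B^n}$, strictly decreases as $\nu$ increases; when $q=0$ the dot product vanishes for every rotation and $h_{\B^n}$ is invariant. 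This settles the monotonicity statement.

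I then specialize to the two extreme admissible values of $\nu$. Taking $\nu=\pi/2$ is always admissible since $|p|^2+|q|^2<1$; it forces $p\cdot q=0$, which by the identity $|x|^2-|y|^2=4\,p\cdot q$ is equivalent to $|x|=|y|$. The formula simplifies to ${\rm ch}(h_{\B^n}(x,y)/2)=(1-|q|^2+|p|^2)/(1-|q|^2-|p|^2)$, and solving for $h_{\B^n}$ and substituting $|p|=|x-y|/2$ and $\sqrt{1-|q|^2}=\sqrt{4-|x+y|^2}/2$ yields the stated lower bound. Pushing $\nu$ down to $0$ forces $Rp\parallel q$, equivalently collinearity of $x,y$ with the origin; this rotation keeps the image in $\B^2\times\B^2$ precisely when $|p|+|q|<1$, which matches the hypothesis $|x-y|<2-|x+y|$. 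Setting $(p\cdot q)^2=|p|^2|q|^2$ factors the denominator as $A\cdot B$ with $A=(1+|p|)^2-|q|^2$ and $B=(1-|p|)^2-|q|^2$, and the numerator equals $((A+B)/2)^2$; a short rearrangement then gives $e^{h_{\B^n}(x,y)}=A/B$, which in $|x\pm y|$-variables is the claimed upper bound.

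The main technical hurdle will be the bookkeeping: verifying that the admissibility condition for the rotation attaining $\nu=0$ reads exactly $|x-y|<2-|x+y|$, and that the two algebraic simplifications above really reproduce the specific closed forms in the statement rather than something merely equivalent up to rearrangement.
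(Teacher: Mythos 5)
Your proposal is correct, and it takes a genuinely different route from the paper. The paper's proof parametrizes $x=|q|+de^{\nu i}$, $y=|q|-de^{\nu i}$, derives the monotonicity of $h_{\B^2}$ in $\nu$ from Theorem \ref{thm_vuo} combined with an externally cited fact (\cite[Rmk 4.4]{ird}) that $\rho_{\B^2}(x,y)$ decreases in $\nu$ together with the obvious monotonicity of $m_1$, and then computes $|q-u|,|q-v|$ explicitly at the two extreme angles $\nu=0$ and $\nu=\pi/2$ to obtain the bounds; it also has to discuss separately when the rotated points remain in the disk. You instead push everything through Corollary \ref{cor_csh}, writing
\begin{align*}
{\rm ch}^2\frac{h_{\B^n}(x,y)}{2}=\frac{(1-|q|^2+|p|^2)^2}{(1-|q|^2-|p|^2)^2-4(p\cdot q)^2},\qquad p=\tfrac{x-y}{2},\ q=\tfrac{x+y}{2},
\end{align*}
which I have checked is algebraically correct, as are your identities $1-x\cdot y=1-|q|^2+|p|^2$, $(1-|x|^2)(1-|y|^2)=(1-|q|^2-|p|^2)^2-4(p\cdot q)^2$, and $|x|^2-|y|^2=4\,p\cdot q$. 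This buys several things: the monotonicity in $\nu$ becomes the trivial observation that only $(p\cdot q)^2=|p|^2|q|^2\cos^2\nu$ varies; both extremal bounds follow from $0\le(p\cdot q)^2\le|p|^2|q|^2$ without any need to verify that the extremal rotated configuration actually lies in $\B^n$ (the lower bound in particular needs no admissibility discussion at all); and both equality characterizations ($|x|=|y|$, resp.\ collinearity with the origin) drop out of the vanishing, resp.\ equality, case of Cauchy--Schwarz. The proof is thereby self-contained, whereas the paper's depends on the cited monotonicity of the hyperbolic metric. Your closing algebra also checks out: with $A=(1+|p|)^2-|q|^2$, $B=(1-|p|)^2-|q|^2$ one has $AB=(1-|q|^2-|p|^2)^2-4|p|^2|q|^2$ and $(A+B)/2=1-|q|^2+|p|^2$, so ${\rm ch}^2(h/2)=((A+B)/2)^2/(AB)$ forces $e^{h}=A/B$ (taking $A\ge B$ since $h\ge0$), which is exactly the stated upper bound; the hypothesis $|x-y|<2-|x+y|$ is precisely $|p|+|q|<1$, i.e.\ $B>0$, so the bound is well defined.
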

\begin{proof}
Let $n=2$ by Remark \ref{r_dim}. Fix $q=(x+y)/2$ and $d=|x-y|/2$. By the invariance of $h_{\B^2}(x,y)$ under rotation around the origin, we can assume that $q\in[0,1)$. Fix $\nu$ so that $x=|q|+de^{\nu i}$ and $y=|q|-de^{\nu i}$ as in Figure \ref{fig3}. Because the distance $h_{\B^2}(x,y)$ is symmetric with respect to $x$ and $y$ and invariant under reflection over a line passing through the origin, we can assume that $\nu\in[0,\pi/2]$ without loss of generality. 

As proved in \cite[Rmk 4.4, p. 8 of 14]{ird}, the hyperbolic metric $\rho_{\B^2}(x,y)$ is decreasing with respect to $\nu$. Trivially, the distance $m_1$ from the origin to the line $L(x,y)$ is increasing with respect to $\nu$. By Theorem \ref{thm_vuo}, the distance $h_{\B^2}(x,y)$ is therefore decreasing with respect to $\nu$. 

Let $u,v\in L(x,y)\cap S^{n-1}$ so that $u\neq v$ and $|x-u|<|y-u|$. If $\nu=0$, we have
\begin{align*}
|q-u|=1-|q|,\quad
|q-v|=1+|q|,\quad
h_{\B^2}(x,y)=\log\frac{(1+d)^2-|q|^2}{(1-d)^2-|q|^2},
\end{align*}
and, if $\nu=\pi/2$, then
\begin{align*}
|q-u|=|q-v|=\sqrt{1-|q|^2},\quad
h_{\B^2}(x,y)=2\log\frac{\sqrt{1-|q|^2}+d}{\sqrt{1-|q|^2}-d}.
\end{align*}
Note that the distance $h_{\B^2}(x,y)$ is not defined for $\nu=0$ if $d\geq1-|q|$. Clearly, if we rotate $x$ and $y$ around their midpoint so that the angle $\nu$ becomes 0, the point $x$ stays inside the unit disk if and only if $d<1-|q|$ or, equivalently, $|x-y|<2-|x+y|$. Since $x,y$ stay in $\B^2$ in the rotation to the other direction so that $\nu=\pi/2$, the distance $h_{\B^2}(x,y)$ is always well-defined for $\nu=\pi/2$. 
\end{proof}

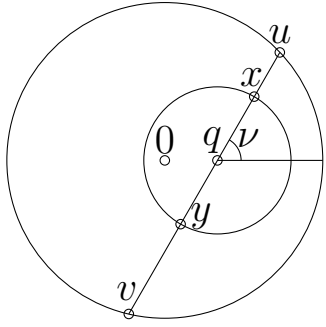
\begin{figure}[ht]
    \centering
    \begin{tikzpicture}[scale=2.1]
    \draw (0,0) circle (1cm);
    \draw (0.728,0.684) -- (-0.228,-0.973);
    \draw (0.333,0) circle (0.03cm);
    \draw (0.728,0.684) circle (0.03cm);
    \draw (0.566,0.404) circle (0.03cm);
    \draw (0.1,-0.404) circle (0.03cm);
    \draw (-0.228,-0.973) circle (0.03cm);
    \draw (0.333,0) circle (0.466cm);
    \draw (0.483,0) arc (0:60:0.15);
    \draw (0.333,0) -- (1,0);
    \draw (0,0) circle (0.03cm);
    \node[scale=1.3] at (0.73,0.8) {$u$};
    \node[scale=1.3] at (0.55,0.53) {$x$};
    \node[scale=1.3] at (0.3,0.13) {$q$};
    \node[scale=1.3] at (0.23,-0.35) {$y$};
    \node[scale=1.3] at (-0.24,-0.83) {$v$};
    \node[scale=1.3] at (0,0.13) {0};
    \node[scale=1.3] at (0.53,0.14) {$\nu$};
    \end{tikzpicture}
    \caption{The points $x,y,q,u,v$ and circle $S^1(q,d)$ as in the proof of Theorem \ref{thm_q} when $q=1/3$, $d=7/15$, $\nu=\pi/3$.}
    \label{fig3}
\end{figure}

\section{Ball inclusion}

In this section, we first study the ball inclusion between the Hilbert metric and the Euclidean metric. In Lemma \ref{lem_hilb}, we offer a formula that can be used for instance writing a code for drawing any ball in the Hilbert metric defined in the unit disk. At the end of this section, there is also a result about the ball inclusion between the Hilbert metric and the hyperbolic metric.

\begin{lemma}\label{lem_inc}
For all $x\in\B^n$ and $0<r<1-|x|$, $B_h(x,l_0)\subseteq B^n(x,r)\subseteq B_h(x,l_1)\subsetneq\B^n$ if and only if
\begin{align*}
l_0&\leq
\begin{cases}
\log\dfrac{(1+|x|)(1-|x|+r)}{(1-|x|)(1+|x|-r)}&\quad\text{if}\quad|x|\leq r,\\
\log\dfrac{(1-|x|^2)\sqrt{r^2+1-|x|^2}}{(\sqrt{r^2+1-|x|^2}-r)(1-|x|^2-r(\sqrt{r^2+1-|x|^2}-r))}&\quad\text{if}\quad|x|>r,
\end{cases}
\\
l_1&\geq\log\frac{(1-|x|)(1+|x|+r)}{(1+|x|)(1-|x|-r)}.
\end{align*}
\end{lemma}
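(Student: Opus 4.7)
The plan is to reduce the ball-inclusion statement to a one-parameter extremization of $h_{\B^n}(x,y)$ on the Euclidean sphere $S^{n-1}(x,r)$. Since the hypothesis $r<1-|x|$ gives $S^{n-1}(x,r)\subset\B^n$, both metrics are continuous on this compact set, and since $h_{\B^n}(x,\cdot)$ is strictly monotone in $|x-y|$ along any Euclidean ray from $x$ (directly from the cross-ratio definition \eqref{q_hil}), the inclusion $B_h(x,l_0)\subseteq B^n(x,r)\subseteq B_h(x,l_1)$ is equivalent to the two inequalities
\begin{align*}
l_0\leq\min_{|y-x|=r}h_{\B^n}(x,y)\qquad\text{and}\qquad l_1\geq\max_{|y-x|=r}h_{\B^n}(x,y);
\end{align*}
the requirement $B_h(x,l_1)\subsetneq\B^n$ is automatic since the right-hand maximum is finite.

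I would then parameterize $y=x+rw$ with $w\in S^{n-1}$, put $s:=x\cdot w\in[-|x|,|x|]$, and use Corollary \ref{cor_csh} to write
\begin{align*}
F(s):={\rm ch}^2\frac{h_{\B^n}(x,y)}{2}=\frac{(1-|x|^2-rs)^2}{(1-|x|^2)(1-|x|^2-2rs-r^2)}.
\end{align*}
A direct differentiation gives
\begin{align*}
F'(s)=\frac{2r^2(1-|x|^2-rs)(s+r)}{(1-|x|^2)(1-|x|^2-2rs-r^2)^2},
\end{align*}
and the hypothesis $r<1-|x|$ forces the factor $1-|x|^2-rs$ to be strictly positive on the admissible range, so the sign of $F'(s)$ agrees with that of $s+r$. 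This splits the analysis into two cases: if $|x|\geq r$, the critical point $s=-r$ lies inside $[-|x|,|x|]$ and yields the minimum, while the maximum occurs at one of the endpoints $s=\pm|x|$; if $|x|<r$, the function $F$ is strictly increasing on $[-|x|,|x|]$, so the minimum is at $s=-|x|$ and the maximum at $s=|x|$.

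Evaluating the critical values is then routine. At $s=-r$ one finds $F(-r)=(1-|x|^2+r^2)/(1-|x|^2)$, so ${\rm sh}(h_{\B^n}(x,y)/2)=r/\sqrt{1-|x|^2}$; writing the resulting distance as $\log((r+S)^2/(1-|x|^2))$ with $S=\sqrt{r^2+1-|x|^2}$ and then using $(S-r)(S+r)=1-|x|^2$ to cancel a factor of $S-r$ against the denominator of the lemma's expression recovers the formula for $l_0$ in the case $|x|>r$. At the two endpoints $s=\pm|x|$ the points $0,x,y$ are collinear, so Corollary \ref{cor_irhoc} reduces $h_{\B^n}$ to the one-dimensional hyperbolic distance, giving $\log\frac{(1-|x|)(1+|x|+r)}{(1+|x|)(1-|x|-r)}$ at $s=|x|$ and $\log\frac{(1+|x|)(1-|x|+r)}{(1-|x|)(1+|x|-r)}$ at $s=-|x|$; the second expression is valid both when $r\leq|x|$ (same ray, $|y|=|x|-r$) and when $r>|x|$ (opposite ray, $|y|=r-|x|$). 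Cross-multiplying the two endpoint values reduces the comparison of these maxima to $(1+|x|)^2>(1-|x|)^2$, so $s=|x|$ always wins: this produces the $l_1$-formula in both cases, and the endpoint $s=-|x|$ furnishes the $l_0$-formula in the case $|x|\leq r$. The main obstacle will be the algebraic simplification at $s=-r$ into the particular shape displayed in the lemma; a quick consistency check at $r=|x|$, where both pieces of the $l_0$-formula collapse to $\log((1+|x|)/(1-|x|))$, confirms that the piecewise statement is internally consistent.
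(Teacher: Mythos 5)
Your proof is correct, and it follows the same overall strategy as the paper — reduce the ball inclusions to extremizing $h_{\B^n}(x,\cdot)$ over the Euclidean sphere $S^{n-1}(x,r)$, locate an interior critical point, and compare with the endpoint (collinear) configurations, with the same case split at $|x|=r$ — but it runs on a different engine. The paper parameterizes by $|u-x|$, the distance from $x$ to the nearer boundary intersection point, and reuses the derivative formula already computed in the proof of Theorem \ref{thm_1tur}; you instead parameterize by $s=x\cdot w$ for $y=x+rw$ and differentiate the closed-form expression for ${\rm ch}^2(h_{\B^n}(x,y)/2)$ coming from Corollary \ref{cor_csh}, which yields a cleaner rational function $F(s)$ whose critical point $s=-r$ is immediate. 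Your version also makes explicit two points the paper leaves implicit: the radial monotonicity of $h_{\B^n}(x,\cdot)$ along chords through $x$, which is what actually converts the ball inclusions into the min/max conditions on the sphere, and the endpoint comparison showing that $s=|x|$ (rather than $s=-|x|$) always realizes the maximum. The algebraic identification of the critical value $\log\bigl((r+S)^2/(1-|x|^2)\bigr)$, $S=\sqrt{r^2+1-|x|^2}$, with the displayed $l_0$-formula via $(S-r)(S+r)=1-|x|^2$ checks out, as does the consistency of the two branches at $|x|=r$. No gaps.
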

\begin{proof}
Fix $n=2$ by Remark \ref{r_dim}. Let us consider the distance $h_{\B^2}(x,y)$ for $y\in S^1(x,r)$. Let $u\in L(x,y)\cap S^1$ so that $u\neq v$ and $|x-u|<|y-u|$. Recall the differentiation of $h_{\B^2}(x,y)$ with respect to $|x-y|$ from the proof of Theorem \ref{thm_1tur}. It follows that $h_{\B^2}(x,y)$ has a local minimum when $|u-x|=-r+\sqrt{r^2+1-|x|}$. Note that the end points of the interval of $|u-x|$ are now $1-|x|$ and $1+|x|$ since there is no limitation $|y|\leq|x|$. Because
\begin{align*}
&-r+\sqrt{r^2+1-|x|}\leq1-|x|
\quad\Leftrightarrow\quad
|x|\leq r,\\
&-r+\sqrt{r^2+1-|x|}\leq1+|x|
\quad\Leftrightarrow\quad
(3|x|+2r)(1+|x|)\geq0,
\end{align*}
the minimum of $h_{\B^2}(x,y)$ for $y\in S^1(x,r)$ is found either when $|u-x|=1-|x|$ or $|u-x|=-r+\sqrt{r^2+1-|x|}$, depending if $|x|\leq r$ or not, and the maximum is attained when $|u-x|=1+|x|$.
\end{proof}

\begin{corollary}
For all $x\in\B^n$ and $l>0$, $B^n(x,r_0)\subseteq B_h(x,l)\subseteq B^n(x,r_1)$ if and only if
\begin{align*}
r_0\leq\frac{(e^l-1)(1-|x|^2)}{1-|x|+e^l(1+|x|)}\quad\text{and}\quad
r_1\geq\begin{cases}
\dfrac{(e^l-1)(1-|x|^2)}{1+|x|+e^l(1-|x|)}&\quad\text{if}\quad|x|\leq r,\\
\vspace{0.1pt}\\
\dfrac{1}{2}\sqrt{\dfrac{1-|x|^2}{e^l}}(e^l-1)&\quad\text{if}\quad|x|>r.
\end{cases}
\end{align*}
\end{corollary}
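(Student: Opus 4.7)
The plan is to invert the extremal formulas supplied by Lemma \ref{lem_inc}. The inclusion $B^n(x,r_0)\subseteq B_h(x,l)$ is equivalent to asking that the maximum of $h_{\B^n}(x,\cdot)$ on $S^{n-1}(x,r_0)$ be at most $l$, while $B_h(x,l)\subseteq B^n(x,r_1)$ is equivalent to asking that the minimum of $h_{\B^n}(x,\cdot)$ on $S^{n-1}(x,r_1)$ be at least $l$. Lemma \ref{lem_inc} gives both extrema explicitly, so each inclusion reduces to solving a single equation for the Euclidean radius as a function of $l$.

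For the lower bound I would equate $l$ with the maximum from Lemma \ref{lem_inc},
$$l=\log\frac{(1-|x|)(1+|x|+r_0)}{(1+|x|)(1-|x|-r_0)},$$
exponentiate, cross-multiply and collect the terms linear in $r_0$. This leads directly to $(e^l-1)(1-|x|^2) = r_0\bigl[(1-|x|)+e^l(1+|x|)\bigr]$, i.e., the stated formula for $r_0$. The analogous computation applied to the first branch $l=\log\frac{(1+|x|)(1-|x|+r_1)}{(1-|x|)(1+|x|-r_1)}$ of the minimum in Lemma \ref{lem_inc} gives the first branch of the $r_1$-formula in the same manner.

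The remaining case, in which the minimum from Lemma \ref{lem_inc} has the cumbersome form
$$\log\frac{(1-|x|^2)\sqrt{r_1^2+1-|x|^2}}{\bigl(\sqrt{r_1^2+1-|x|^2}-r_1\bigr)\bigl(1-|x|^2-r_1(\sqrt{r_1^2+1-|x|^2}-r_1)\bigr)},$$
is the main obstacle. The key simplification is to set $A=\sqrt{r_1^2+1-|x|^2}$, so $A^2-r_1^2=1-|x|^2$ and consequently
$$1-|x|^2-r_1(A-r_1)=A^2-r_1 A=A(A-r_1).$$
The logarithm then collapses to $\log\bigl((1-|x|^2)/(A-r_1)^2\bigr)$. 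Hence $A-r_1 = e^{-l/2}\sqrt{1-|x|^2}$ and $A+r_1 = (1-|x|^2)/(A-r_1) = e^{l/2}\sqrt{1-|x|^2}$, and subtracting yields
$$r_1 = \tfrac{1}{2}\sqrt{1-|x|^2}\,(e^{l/2}-e^{-l/2}) = \tfrac{1}{2}\sqrt{(1-|x|^2)/e^l}\,(e^l-1),$$
which is the second branch. To finish, I would verify that the case split $|x|\leq r_1$ versus $|x|>r_1$ inherited from Lemma \ref{lem_inc} is equivalent, after a short manipulation, to the clean condition $e^l\geq(1+|x|)/(1-|x|)$ versus its reverse, so that the dichotomy can be stated in terms of $l$ and $|x|$ alone.
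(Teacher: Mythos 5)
Your proposal is correct and follows exactly the route the paper intends: the paper's proof is the single line ``Follows from Lemma \ref{lem_inc}'', and you have simply carried out the inversion of the three extremal formulas from that lemma, including the key simplification $1-|x|^2-r(A-r)=A(A-r)$ with $A=\sqrt{r^2+1-|x|^2}$ that collapses the second branch to $\log\bigl((1-|x|^2)/(A-r)^2\bigr)$. All the algebra checks out (and your observation that the case split $|x|\le r_1$ versus $|x|>r_1$ amounts to $e^l\ge(1+|x|)/(1-|x|)$ is a useful clarification, since the corollary's stated condition on an undefined $r$ is otherwise ambiguous).
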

\begin{proof}
Follows from Lemma \ref{lem_inc}.   
\end{proof}

\begin{lemma}\label{lem_hilb}
For all $x\in\B^2$ and $l>0$,
\begin{align*}
&B_h(x,l)=\left.\left\{y=x+k(x-\frac{x}{|x|}e^{\psi i})\,\right|\,\psi\in[0,2\pi),\quad0\leq k<k_1(\psi)\vphantom{\frac{1}{1}}\right\},\\
&k_1(\psi)=\frac{(1-|x|^2)(e^l-1)}{1-|x|^2+e^l(1+|x|^2-2|x|\cos(\psi))}
\end{align*}
\end{lemma}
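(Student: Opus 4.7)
The strategy is to invert the explicit distance formula already established during the proof of Theorem \ref{thm_1tur}. By Remark \ref{r_dim} we may assume $n = 2$. The case $x = 0$ reduces to the standard identity $B_h(0, l) = B^2(0, \th(l/2))$, which matches the formula since $k_1(\psi)$ collapses to $(e^l-1)/(e^l+1)$; so we focus on $x \neq 0$.

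For the parametrization, every $y \in \B^2 \setminus \{x\}$ lies on a unique chord of $\B^2$ through $x$, meeting $S^1$ at two points; writing $y = x + k(x - u)$ for $u \in S^1$ and $k \geq 0$ puts $y$ on the half-line from $x$ pointing away from $u$. Setting $u = (x/|x|)e^{\psi i}$ makes $u$ sweep $S^1$ as $\psi$ runs over $[0, 2\pi)$. Each chord through $x$ corresponds to two values of $\psi$ (one for each endpoint), and the two induced half-lines together cover the full chord, so this parametrization exhausts $\B^2$ (with $k = 0$ returning $y = x$ for every $\psi$).

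From the proof of Theorem \ref{thm_1tur},
\[
h_{\B^2}(x,y) = \log\frac{(|u-x|+|x-y|)(1-|x|^2)}{|u-x|\bigl(1-|x|^2 - |x-y|\,|u-x|\bigr)}.
\]
Substituting $|x-y| = k|u-x|$ and $|u-x|^2 = 1 + |x|^2 - 2|x|\cos(\psi)$ from \eqref{q_uxpsi} reduces this to
\[
h_{\B^2}(x,y) = \log\frac{(1+k)(1-|x|^2)}{1-|x|^2 - k(1+|x|^2 - 2|x|\cos(\psi))}.
\]
Setting $h_{\B^2}(x,y) < l$ and clearing the logarithm yields a linear inequality in $k$, solvable explicitly to give $k < k_1(\psi)$ as stated. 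The condition $k < k_1(\psi)$ automatically places $y$ inside $\B^2$, since $y$ leaves $\B^2$ precisely at $k = k_v := (1-|x|^2)/|u-x|^2$ (corresponding to $y = v$), and a direct comparison shows $k_1(\psi) < k_v$ reduces to the trivial $1-|x|^2 > 0$. The only non-routine point is confirming that the parametrization by $\psi \in [0, 2\pi)$ genuinely covers all of $\B^2$ rather than a proper subset; the algebra to extract $k_1(\psi)$ is then straightforward.
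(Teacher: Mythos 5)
Your proposal is correct and follows essentially the same route as the paper's own proof: both parametrize $y = x + k(x-u)$ with $u = (x/|x|)e^{\psi i}$, substitute into the distance formula derived in the proof of Theorem \ref{thm_1tur} to get $h_{\B^2}(x,y)=\log\frac{(1+k)(1-|x|^2)}{1-|x|^2-k(1+|x|^2-2|x|\cos\psi)}$, and solve for $k$. Your additional checks (that $k_1(\psi)$ keeps $y$ strictly inside $\B^2$, that the parametrization covers the disk, and the $x=0$ degeneracy) are sound refinements the paper leaves implicit.
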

\begin{proof}
Suppose that $y\in S_h(x,l)$. Fix $u,v\in L(x,y)\cap S^{n-1}$ so that $u\neq v$ and $|x-u|<|y-u|$. Now, $y=x+k_1(x-u)$ with $k_1=|x-y|/|u-x|$. If $\psi\in[0,2\pi)$ is chosen so that $u=xe^{\psi}/|x|$, then $|u-x|$ is as in \eqref{q_uxpsi} and, by the proof of Theorem \ref{thm_1tur},
\begin{align*}
h_{\B^2}(x,y)=\log\frac{(1+k_1)(1-|x|^2)}{1-|x|^2-k_1|u-x|^2}.    
\end{align*} 
We can solve that $h_{\B^2}(x,y)=l$ if and only if $k_1$ is as $k_1(\psi)$ in the lemma, from which the result follows.
\end{proof}

Figure \ref{fig4} shows a ball in the Hilbert metric drawn with an R-code that utilizes the result of Lemma \ref{lem_hilb}. 

\begin{figure}
    \centering
    \includegraphics[scale=0.5,trim={1cm 2.5cm 1cm 3cm},clip]{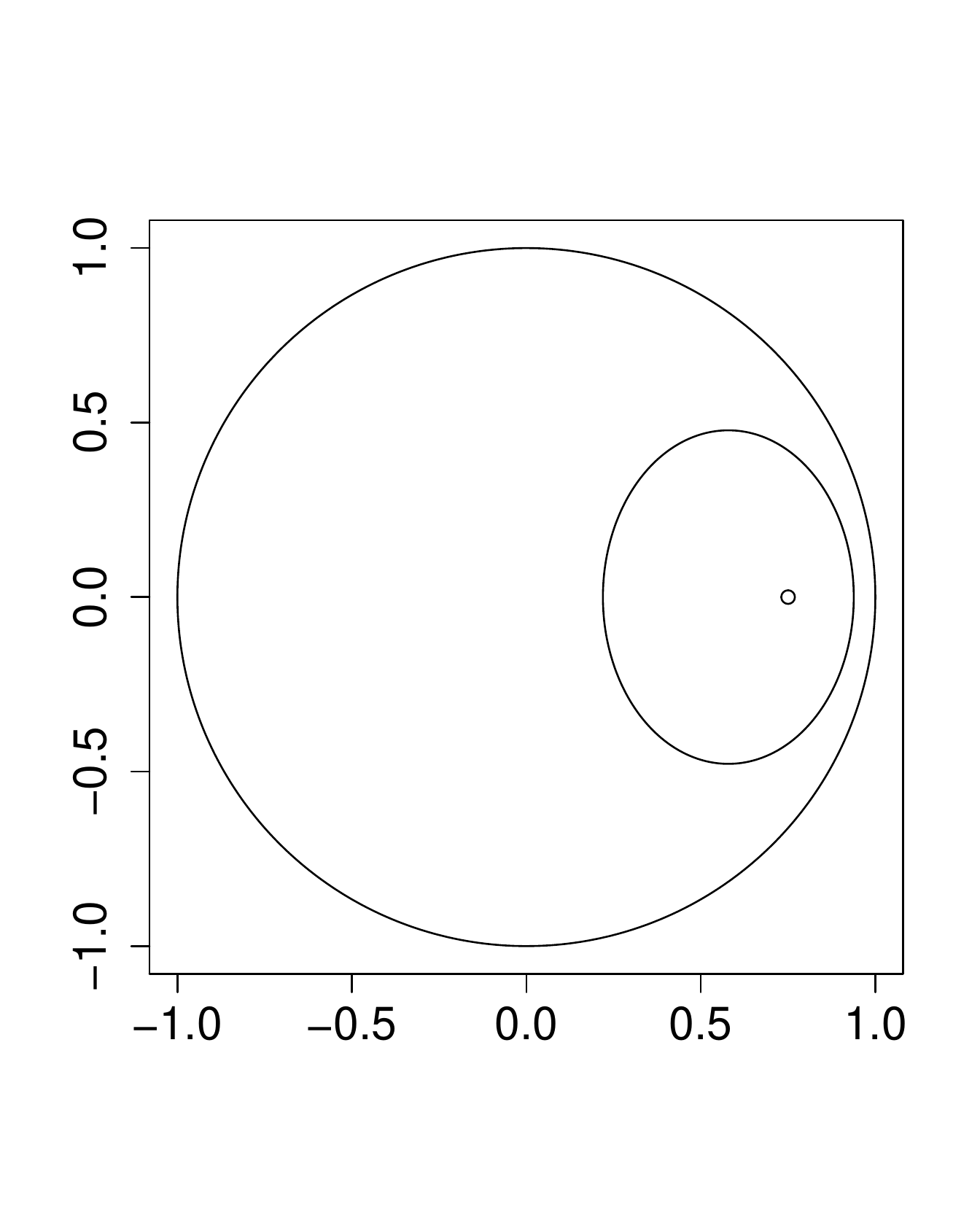}
    \caption{The disk $B_h(x,l)$ drawn with code using Lemma \ref{lem_hilb} in the unit disk for $x=0.75$ and $l=1.5$.}
    \label{fig4}
\end{figure}

\begin{lemma}
For $x\in\B^n$ and $r>0$, $B_h(x,l_0)\subseteq B_\rho(x,r)\subseteq B_h(x,l_1)\subsetneq\B^n$ if and only if
\begin{align*}
l_0\leq2\,{\rm arsh}\left(\sqrt{1-|x|^2}\,{\rm sh}(r/2)\right)\quad\text{and}\quad
l_1\geq r.  
\end{align*}    
\end{lemma}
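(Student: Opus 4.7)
The plan is to apply Theorem \ref{thm_vuo} to determine the extremes of $h_{\B^n}(x,\cdot)$ on the hyperbolic sphere $S_\rho(x,r)=\{y\in\B^n : \rho_{\B^n}(x,y)=r\}$, and then to translate those into the sharp thresholds $l_0$ and $l_1$.

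First I would fix $y$ with $\rho_{\B^n}(x,y)=r$ and invoke Theorem \ref{thm_vuo} to get
\begin{equation*}
{\rm sh}\frac{h_{\B^n}(x,y)}{2} = \sqrt{1-m_1^2}\,{\rm sh}\frac{r}{2},
\end{equation*}
where $m_1$ is the Euclidean distance from $0$ to the line $L(x,y)$. The right-hand side depends on $y$ only through $m_1\in[0,1)$ and is strictly decreasing in $m_1$, so $h_{\B^n}(x,y)$ attains its maximum on $S_\rho(x,r)$ exactly where $m_1=0$ and its minimum exactly where $m_1$ is largest.

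Next I would determine the range of $m_1$ over $S_\rho(x,r)$. Elementary trigonometry gives $m_1=|x|\sin\theta$, where $\theta$ is the angle between $L(x,y)$ and the ray from $0$ through $x$. Because every hyperbolic ball $B_\rho(x,r)$ is a Euclidean ball containing $x$ in its interior, every Euclidean line through $x$ meets $S_\rho(x,r)$; hence $\theta$ takes every value in $[0,\pi/2]$ and $m_1$ ranges over the full interval $[0,|x|]$. Substituting these extremes, the maximum of $h_{\B^n}(x,y)$ on $S_\rho(x,r)$ equals $r$ (consistent with Corollary \ref{cor_irhoc}, attained when $x,y,0$ are collinear), while the minimum equals $2\,{\rm arsh}(\sqrt{1-|x|^2}\,{\rm sh}(r/2))$ (attained when $L(x,y)\perp Ox$).

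Finally, the inclusion $B_h(x,l_0)\subseteq B_\rho(x,r)$ is equivalent to $l_0\le h_{\B^n}(x,y)$ whenever $\rho_{\B^n}(x,y)\ge r$. As $y$ moves along a Euclidean ray issuing from $x$, both $\rho_{\B^n}(x,\cdot)$ and $h_{\B^n}(x,\cdot)$ are strictly increasing (for $\rho$ because every hyperbolic sphere $S_\rho(x,\cdot)$ is a Euclidean sphere meeting the ray in a single point; for $h$ directly from the cross-ratio definition \eqref{q_hil}), so the infimum of $h_{\B^n}(x,y)$ over $\{\rho_{\B^n}(x,y)\ge r\}$ is attained on $S_\rho(x,r)$ and equals the minimum computed above. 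The analogous argument yields $l_1\ge r$, and $B_h(x,l_1)\subsetneq\B^n$ follows automatically since $h_{\B^n}(x,y)\to\infty$ as $y\to\partial\B^n$. The only nontrivial step is identifying the range of $m_1$, which reduces to the standard fact that hyperbolic balls in $\B^n$ are Euclidean balls.
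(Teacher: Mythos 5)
Your proposal is correct and follows essentially the same route as the paper: apply Theorem \ref{thm_vuo} on the hyperbolic sphere $S_\rho(x,r)$, observe that $m_1$ ranges over $[0,|x|]$ with the extremes at $L(x,y)\perp L(0,x)$ and $y$ collinear with $0,x$, and read off the sharp bounds $2\,{\rm arsh}(\sqrt{1-|x|^2}\,{\rm sh}(r/2))$ and $r$. You merely spell out the monotonicity-along-rays step and the $x=0$-free treatment of the range of $m_1$ that the paper compresses into ``The inclusion result follows.''
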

\begin{proof}
Suppose that $x\neq0$. It follows by Theorem \ref{thm_vuo} that, for all $y\in S_\rho(x,r)$,
\begin{align*}
\sqrt{1-|x|^2}\,{\rm sh}(r/2)\leq{\rm sh}\frac{h_{\B^n}(x,y)}{2}\leq{\rm sh}(r/2),    
\end{align*}
where the equality holds for the first part of the inequality if and only if the line $L(x,y)$ is perpendicular to the line $L(x,0)$ and for the second part if and only if $y\in L(0,x)$. In the special case $x=0$, the equalities of both parts of the inequality hold. The inclusion result follows.   
\end{proof}

\section{Distortion under mappings}

In this section, we use Theorem \ref{thm_vuo} to study the distortion of the Hilbert metric under the following sense-preserving M\"obius transformation. For $a\in\B^n\setminus\{0\}$, define $T_a:\B^n\to\B^n$ as
\begin{align*}
T_a(z)=p_a\circ\sigma_a,    
\end{align*}
where $p_a$ is the reflection in the $(n-1)$-dimensional plane through the origin and orthogonal to $a$, and $\sigma_a$ is the inversion in the sphere $S^{n-1}(a^*,\sqrt{1/|a|^2-1})$ \cite[p. 11]{hkv}. If $n=2$, we have by \cite[p. 459]{hkv}
\begin{align*}
T_a(z)=\frac{z-a}{1-\overline{a}z}.    
\end{align*}

\begin{corollary}\label{cor_dist}
For all $x,y,a\in\B^n$,
\begin{align*}
h_{\B^n}(T_a(x),T_a(y))
=2\,{\rm arsh}\left(\sqrt{\frac{1-m_2^2}{1-m_1^2}}\,{\rm sh}\frac{h_{\B^n}(x,y)}{2}\right),
\end{align*}
where $m_1$ and $m_2$ are the Euclidean distances from the origin to the line $L(x,y)$ and to the line $L(T_a(x),T_a(y))$, respectively.
\end{corollary}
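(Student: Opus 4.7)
The plan is to reduce the statement to Theorem \ref{thm_vuo} applied twice, once at the pair $(x,y)$ and once at the pair $(T_a(x),T_a(y))$, bridged by the conformal (Möbius) invariance of the hyperbolic metric. Since $T_a$ is a Möbius automorphism of $\B^n$ (it is the composition of an inversion in a sphere orthogonal to $S^{n-1}$ with a hyperplane reflection through the origin), we have $\rho_{\B^n}(T_a(x),T_a(y))=\rho_{\B^n}(x,y)$ for all $x,y\in\B^n$. This is the only nontrivial input beyond Theorem \ref{thm_vuo}; everything else is algebraic rearrangement.

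First I would apply Theorem \ref{thm_vuo} directly to the pair $(x,y)$ to get
\begin{align*}
\text{sh}\frac{\rho_{\B^n}(x,y)}{2}=\frac{1}{\sqrt{1-m_1^2}}\,\text{sh}\frac{h_{\B^n}(x,y)}{2},
\end{align*}
and then apply the same theorem to $(T_a(x),T_a(y))$, in which case the Euclidean distance from the origin to the line through these two image points is, by definition, $m_2$; this gives
\begin{align*}
\text{sh}\frac{h_{\B^n}(T_a(x),T_a(y))}{2}=\sqrt{1-m_2^2}\,\text{sh}\frac{\rho_{\B^n}(T_a(x),T_a(y))}{2}.
\end{align*}

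Next I would invoke the Möbius invariance to replace $\rho_{\B^n}(T_a(x),T_a(y))$ by $\rho_{\B^n}(x,y)$ on the right, and then substitute the first equation. This yields
\begin{align*}
\text{sh}\frac{h_{\B^n}(T_a(x),T_a(y))}{2}=\sqrt{\frac{1-m_2^2}{1-m_1^2}}\,\text{sh}\frac{h_{\B^n}(x,y)}{2},
\end{align*}
and the stated formula then follows by applying $2\,\text{arsh}(\cdot)$ to both sides. One should briefly note that the degenerate cases $x=y$ and $a=0$ are trivial (both sides vanish, respectively reduce to the identity map), and that the square root is well-defined because $m_1,m_2\in[0,1)$ whenever the corresponding line meets $\B^n$ in a nondegenerate chord.

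There is no real obstacle here; the only thing to be slightly careful about is that Theorem \ref{thm_vuo} is invoked with the correct perpendicular distance in each instance, namely $m_1$ for the chord through $x,y$ and $m_2$ for the chord through $T_a(x),T_a(y)$, which in general differ because $T_a$ does not preserve Euclidean distances from the origin to lines.
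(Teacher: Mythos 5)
Your proof is correct and is exactly the paper's argument: apply Theorem \ref{thm_vuo} to both pairs $(x,y)$ and $(T_a(x),T_a(y))$, link the two identities via the M\"obius invariance of $\rho_{\B^n}$, and rearrange. The paper compresses this into a single chain of equalities, but the content is identical.
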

\begin{proof}
By Theorem \ref{thm_vuo} and conformal invariance of the hyperbolic metric,
\begin{align*}
\frac{1}{\sqrt{1-m_1^2}}\,{\rm sh}\frac{h_{\B^n}(x,y)}{2}
={\rm sh}\frac{\rho_{\B^n}(x,y)}{2}
=\frac{1}{\sqrt{1-m_2^2}}{\rm sh}\frac{h_{\B^n}(T_a(x),T_a(y))}{2}\\
\end{align*}
from which the result follows.
\end{proof}

\begin{remark}
For $n=2$, the distance $m_1$ used in Corollary \ref{cor_dist} is as in \eqref{q_m1f} and, by inputting $T_a(x)$ and $T_a(y)$ into this formula \eqref{q_m1f}, we can solve that
\begin{align*}
m_2
=\frac{|{\Im}((x-a)(\overline{y}-\overline{a})(1-a\overline{x})({1-\overline{a}y}))|}{|(x-a)(1-a\overline{x})|1-a\overline{y}|^2-(y-a)(1-a\overline{y})|1-a\overline{x}|^2|}.
\end{align*}
\end{remark}

\begin{remark}
For all $t>0$ and $k\geq1$, we have $k{\rm sh}(t)\leq{\rm sh}(kt)$, so it follows from Corollary \ref{cor_dist} that
\begin{align*}
h_{\B^n}(T_a(x),T_a(y))
\leq\sqrt{\frac{1-m_2^2}{1-m_1^2}}\,h_{\B^n}(x,y),
\end{align*}
for all $x,y,a\in\B^n$ if $m_2\leq m_1$.    
\end{remark}

\begin{corollary}\label{cor_minxy}
For all $x,y,a\in\B^n$,
\begin{align*}
h_{\B^n}(T_a(x),T_a(y))
\leq2\,{\rm arsh}\left(\frac{1}{\sqrt{1-\min\{|x|,|y|,|x+y|/2\}^2}}\,{\rm sh}\frac{h_{\B^n}(x,y)}{2}\right).
\end{align*}
\end{corollary}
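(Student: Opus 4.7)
The plan is to start from Corollary \ref{cor_dist}, which already expresses $h_{\B^n}(T_a(x),T_a(y))$ as $2\,{\rm arsh}$ applied to $\sqrt{(1-m_2^2)/(1-m_1^2)}\,{\rm sh}(h_{\B^n}(x,y)/2)$. Since ${\rm arsh}$ is strictly increasing, the desired inequality reduces to the purely geometric claim
\[
\sqrt{\frac{1-m_2^2}{1-m_1^2}}\;\leq\;\frac{1}{\sqrt{1-\min\{|x|,|y|,|x+y|/2\}^2}}.
\]

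The first move is to discard the image-side distance by using $m_2\geq0$, hence $1-m_2^2\leq 1$. This reduces the task to showing $m_1\leq \min\{|x|,|y|,|x+y|/2\}$ (and noting that this minimum is strictly less than $1$, which is immediate from $x,y\in\B^n$ and the triangle inequality $|x+y|/2\leq (|x|+|y|)/2<1$, so all expressions under the square roots are positive).

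The geometric claim is the conceptual heart, and it is actually elementary: $m_1$ is by definition the Euclidean distance from the origin to the line $L(x,y)$, which in particular is a lower bound for the distance from the origin to \emph{any} point on that line. Since $x$, $y$, and the midpoint $(x+y)/2$ all lie on $L(x,y)$, we obtain
\[
m_1\leq |x|,\qquad m_1\leq |y|,\qquad m_1\leq |x+y|/2,
\]
hence $m_1\leq\min\{|x|,|y|,|x+y|/2\}$, so that $1-m_1^2\geq 1-\min\{|x|,|y|,|x+y|/2\}^2$.

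Combining these two estimates yields the bound on the square-root factor, and applying the monotone function $2\,{\rm arsh}(\cdot\,{\rm sh}(h_{\B^n}(x,y)/2))$ to both sides gives the corollary. There is no real obstacle here beyond being careful that the right-hand side remains well-defined; the main content is simply the observation that three obvious points on $L(x,y)$ supply upper bounds on the foot-of-perpendicular distance $m_1$, after which Corollary \ref{cor_dist} does all the work.
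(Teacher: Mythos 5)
Your proposal is correct and follows the same route as the paper, whose proof of this corollary is simply the statement that it follows from Corollary \ref{cor_dist}; you supply exactly the details that are left implicit there, namely $1-m_2^2\leq 1$ and $m_1\leq\min\{|x|,|y|,|x+y|/2\}$ because $x$, $y$, and $(x+y)/2$ all lie on $L(x,y)$. The monotonicity of ${\rm arsh}$ then gives the claim, so there is nothing to add.
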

\begin{proof}
Follows from Corollary \ref{cor_dist}.    
\end{proof}

\begin{corollary}
For all $a\in\B^n\setminus\{0\}$ and $x,y\in\B^n(0,|a|/2)$,
\begin{align*}
h_{\B^n}(T_a(x),T_a(y))
\leq2\,{\rm arsh}\left(\frac{1}{\sqrt{1-|a|^2/4}}\,{\rm sh}\frac{h_{\B^n}(x,y)}{2}\right).
\end{align*}
\end{corollary}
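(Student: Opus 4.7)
The plan is to deduce this corollary directly from Corollary \ref{cor_minxy} by a monotonicity argument, so no new geometric work is needed. The only content of the statement is the substitution of the trivial bound $\min\{|x|,|y|,|x+y|/2\} \le |a|/2$ for $x,y$ restricted to the ball $\B^n(0,|a|/2)$.

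First, I would invoke Corollary \ref{cor_minxy} to obtain
\begin{align*}
h_{\B^n}(T_a(x),T_a(y))
\le 2\,\mathrm{arsh}\!\left(\frac{1}{\sqrt{1-M^2}}\,{\rm sh}\frac{h_{\B^n}(x,y)}{2}\right),
\end{align*}
where $M=\min\{|x|,|y|,|x+y|/2\}$. Next, I would observe that the hypothesis $x,y\in \B^n(0,|a|/2)$ forces $|x|<|a|/2$ and $|y|<|a|/2$, so in particular
\begin{align*}
M \;\le\; \min\{|x|,|y|\} \;<\; \frac{|a|}{2}.
\end{align*}

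Since $a\in\B^n$, we have $|a|/2<1/2<1$, so the function $t\mapsto 1/\sqrt{1-t^2}$ is well-defined and strictly increasing on $[0,|a|/2]$. Consequently
\begin{align*}
\frac{1}{\sqrt{1-M^2}} \;\le\; \frac{1}{\sqrt{1-|a|^2/4}}.
\end{align*}
Composing with the strictly increasing function $t\mapsto 2\,\mathrm{arsh}(t\cdot {\rm sh}(h_{\B^n}(x,y)/2))$ preserves the inequality and yields the claimed bound.

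The main (and only) obstacle is bookkeeping: making sure that the monotonicity chain is applied in the right direction and that the point $|a|/2$ lies in the domain where $1/\sqrt{1-t^2}$ makes sense, both of which are immediate from $a\in\B^n\setminus\{0\}$. No case analysis or geometry beyond Corollary \ref{cor_minxy} is required.
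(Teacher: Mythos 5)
Your proposal is correct and follows the same route as the paper, which simply states that the result follows from Corollary \ref{cor_minxy}; you have just made explicit the monotonicity bookkeeping ($M\le\min\{|x|,|y|\}<|a|/2$ and the increasing nature of $t\mapsto 1/\sqrt{1-t^2}$ and of $\mathrm{arsh}$) that the paper leaves implicit.
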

\begin{proof}
Follows from Corollary \ref{cor_minxy}.    
\end{proof}

Numerical tests suggest that the following result holds.

\begin{conjecture}\label{conj_thb}
For all $x,y,a\in\B^n$,
\begin{align*}
\frac{h_{\B^n}(T_a(x),T_a(y))}{h_{\B^n}(x,y)}\leq1+|a|.
\end{align*}
\end{conjecture}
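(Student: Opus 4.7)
The most natural route is to transfer everything through Corollary~\ref{cor_dist}. Writing $s:=h_{\B^n}(x,y)/2$ and $\lambda:=\sqrt{(1-m_2^2)/(1-m_1^2)}$, that corollary rewrites the ratio in question as
\begin{align*}
\frac{h_{\B^n}(T_a(x),T_a(y))}{h_{\B^n}(x,y)}\;=\;\frac{\mathrm{arsh}(\lambda\,\mathrm{sh}\, s)}{s}.
\end{align*}
The plan is to (i) show that the right-hand side is bounded above by $\max(\lambda,1)$ uniformly in $s>0$, reducing the conjecture to a purely geometric inequality on $\lambda$; (ii) reduce the configuration to $n=2$ with $a\in(0,1)\subset\R$; (iii) establish the resulting chord-length inequality.

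For step~(i), set $g(s)=\mathrm{arsh}(\lambda\,\mathrm{sh}\, s)/s$ and $g_1(s)=\mathrm{arsh}(\lambda\,\mathrm{sh}\, s)$. The sign of $g'(s)$ equals that of $sg_1'(s)-g_1(s)$, which vanishes at $s=0$ and has derivative
\begin{align*}
sg_1''(s)\;=\;\frac{s\lambda\,\mathrm{sh}\, s\,(1-\lambda^2)}{(1+\lambda^2\,\mathrm{sh}^2 s)^{3/2}}.
\end{align*}
Hence $g$ is monotone decreasing for $\lambda>1$ and monotone increasing for $\lambda<1$; together with $\lim_{s\to 0^+}g(s)=\lambda$ and $\lim_{s\to\infty}g(s)=1$ this gives $\sup_{s>0}g(s)=\max(\lambda,1)$. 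Because $|a|\geq 0$, the conjecture is therefore equivalent to
\begin{align*}
\sqrt{\frac{1-m_2^2}{1-m_1^2}}\;\leq\;1+|a|,
\end{align*}
which says that $T_a$ enlarges the chord length $2\sqrt{1-m_i^2}$ cut out of $\B^n$ by the relevant Euclidean line by at most the factor $1+|a|$.

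For step~(ii), Remark~\ref{r_dim} lets us reduce the Hilbert distances to their values in appropriate $2$-planes, and a double rotation (one to put $a$ along $e_1$, one within the perpendicular hyperplane) lets us assume $a\in(0,1)\subset\R$ and $x,y\in\B^2$. For step~(iii), combining the identities
\begin{align*}
T_a x-T_a y=\frac{(1-a^2)(x-y)}{(1-ax)(1-ay)},\qquad 1-|T_a z|^2=\frac{(1-a^2)(1-|z|^2)}{|1-az|^2},
\end{align*}
with Remark~\ref{r_findm1} for $m_1$ and the formula for $m_2$ in the remark after Corollary~\ref{cor_dist}, the inequality becomes the polynomial statement $(1+a)^2(1-m_1^2)-(1-m_2^2)\geq 0$ in $\Re x,\Im x,\Re y,\Im y,a$, for which the natural target is a sum-of-squares factorisation.

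The main obstacle is step~(iii), and a warning is in order before investing effort. In the limit $y\to x$, $\lambda$ becomes the local Hilbert-distortion factor of $T_a$; taking real $x\in(0,1)$ with the increment $y-x$ purely imaginary gives $\lambda=\sqrt{1-a^2}/(1-ax)$, which tends to $\sqrt{(1+a)/(1-a)}$ as $x\to 1^-$ and is strictly greater than $1+a$ for every $a\in(0,1)$. For instance $a=\tfrac12$, $x=\tfrac{9}{10}$, $y=x+\tfrac{i}{100}$ already yields a ratio $\approx 1.576>1.5$. The reduction thus converts the conjecture into a clean inequality, but this calculation suggests that the bound $1+|a|$ may need to be replaced by $\sqrt{(1+|a|)/(1-|a|)}$, or that a hypothesis bounding $\max(|x|,|y|)$ away from $1$ is required; these points should be rechecked numerically before step~(iii) is pursued.
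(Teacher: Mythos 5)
The paper offers no proof of this statement: it appears only as a conjecture, supported by the remark that ``numerical tests suggest'' it. So there is no argument of the authors to compare yours against; the relevant question is whether your analysis is sound, and it is --- in fact it \emph{refutes} the conjecture rather than proving it. Your reduction through Corollary \ref{cor_dist} is correct: with $s=h_{\B^n}(x,y)/2$ and $\lambda=\sqrt{(1-m_2^2)/(1-m_1^2)}$ the ratio equals $g(s)=\operatorname{arsinh}(\lambda\,{\rm sh}\,s)/s$, and your monotonicity computation (the sign of $g'$ is that of $sg_1''$, hence of $1-\lambda^2$) correctly gives $\sup_{s>0}g=\max(\lambda,1)$, approached as $s\to0^+$ when $\lambda>1$. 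Since $\lambda$ tends to a limit as $y\to x$ along a fixed direction, the conjecture forces $\lambda\le1+|a|$ in every such limit, and your boundary computation violates this: for $a,x\in(0,1)$ and $y-x$ purely imaginary one has $m_1=x$, $m_2\to T_a(x)$, hence $\lambda\to\sqrt{1-a^2}/(1-ax)\to\sqrt{(1+a)/(1-a)}>1+a$ as $x\to1^-$. I verified your numerical instance directly from the definition \eqref{q_hil}: for $a=1/2$, $x=9/10$, $y=9/10+i/100$, the chord is the vertical line $\Re z=9/10$, the cross-ratio is $(\sqrt{0.19}+0.01)/(\sqrt{0.19}-0.01)$, so $h_{\B^2}(x,y)\approx0.04589$, while $h_{\B^2}(T_a(x),T_a(y))\approx0.07225$; the ratio is $\approx1.574>3/2=1+|a|$. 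The conjecture as stated is false.

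Two remarks on presentation. First, do not hedge: your final paragraph is not a ``warning'' that the bound ``may need to be replaced'' --- it is a counterexample, and you should state it as such and drop step (iii) entirely. Second, the corrected bound $\sqrt{(1+|a|)/(1-|a|)}$ would indeed follow from your step (i), but only once one proves the genuinely remaining geometric claim $\sqrt{1-m_2^2}\le\sqrt{(1+|a|)/(1-|a|)}\,\sqrt{1-m_1^2}$ for \emph{all} pairs $x,y$, not merely in the infinitesimal radial limit; note that $L(T_a(x),T_a(y))$ is a secant of the image of the chord under $T_a$ (generally a circular arc), so $m_2$ depends on $x,y$ and not just on the line $L(x,y)$. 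Your limit computation shows the constant $\sqrt{(1+|a|)/(1-|a|)}$ could not be improved, but not yet that it suffices.
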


\begin{remark}
The distortion of several other hyperbolic-type metrics under this mapping has been researched in \cite{sch}, for which conjectures similar to Conjecture \ref{conj_thb} has been suggested, see for instance \cite[Conj. 1.6, p. 684]{chkv}.    
\end{remark}

We will yet present one corollary related to the distortion of the Hilbert metric under quasi-regular mappings. See \cite[pp. 289-288]{hkv} for the definition of $K$-quasiregular mappings. Define an increasing homeomorphism $\varphi_{K,2}:[0,1]\to[0,1]$, \cite[(9.13), p. 167]{hkv} as
\begin{align*}
\varphi_{K,2}(r)=\frac{1}{\gamma_2^{-1}(K\gamma_2(1\slash r))},\quad
0<r<1,\,K>0.
\end{align*}
Here, the function $\gamma_2:(1,\infty)\to (0,\infty)$ is a decreasing homeomorphism known as the \emph{Gr\"otzsch capacity}, and it has the following formula \cite[(7.18), p. 122]{hkv}
\begin{align*}
\gamma_2(1/r)=\frac{2\pi}{\mu(r)},\quad \mu(r)=\frac{\pi}{2}\frac{\K(\sqrt{1-r^2})}{\K(r)},\quad
\K(r)=\int^1_0 \frac{dx}{\sqrt{(1-x^2)(1-r^2x^2)}}
\end{align*}
with $0<r<1$. Define then a number \cite[(10.4), p. 203]{avv}
\begin{align*}
\lambda(K)=\left(\frac{\varphi_{K,2}(1/\sqrt{2})}{\varphi_{1/K,2}(1/\sqrt{2})}\right)^2.   
\end{align*}
Clearly, $\lambda(1)=1$ and, if $K>1$, we have $\lambda(K)<e^{\pi(K-1/K)}$ by \cite[Thm 10.35, p. 219]{avv}. 

\begin{theorem}\label{thm_kqus}
If $f:\B^2\to\B^2$ is a non-constant $K$-quasiregular mapping, then
\begin{align*}
{\rm sh}\frac{\rho_{\B^2}(f(x),f(y))}{2}\leq\lambda(K)^{1/2}\max\left\{\left({\rm sh}\frac{\rho_{\B^2}(x,y)}{2}\right)^K,\left({\rm sh}\frac{\rho_{\B^2}(x,y)}{2}\right)^{1/K}\right\}  
\end{align*}   
for all $x,y\in\B^2$.
\end{theorem}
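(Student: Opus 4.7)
The plan is to deduce this hyperbolic distortion inequality from two classical ingredients: the Schwarz lemma for $K$-quasiregular self-mappings of $\B^2$, and a sharp estimate for the special function $\varphi_{K,2}$ in terms of $\lambda(K)$.

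First, I would apply the Hersch--Pfluger--type Schwarz lemma for non-constant $K$-quasiregular mappings $f:\B^2\to\B^2$ (see e.g.\ \cite[Thm 11.2]{hkv}), which asserts ${\rm th}(\rho_{\B^2}(f(x),f(y))/2)\leq\varphi_{K,2}({\rm th}(\rho_{\B^2}(x,y)/2))$. Writing $r={\rm th}(\rho_{\B^2}(x,y)/2)$ and $s={\rm sh}(\rho_{\B^2}(x,y)/2)=r/\sqrt{1-r^2}$, I would then use the elementary identity ${\rm sh}(t/2)={\rm th}(t/2)/\sqrt{1-{\rm th}^2(t/2)}$ together with the involution-type relation $\sqrt{1-\varphi_{K,2}(r)^2}=\varphi_{1/K,2}(\sqrt{1-r^2})$ (which follows from the defining property $\mu(\varphi_{K,2}(r))=\mu(r)/K$ and the classical identity $\mu(r)\mu(\sqrt{1-r^2})=\pi^2/4$ for the modulus function) to rewrite the Schwarz lemma in $\sinh$-form as
\[
{\rm sh}\frac{\rho_{\B^2}(f(x),f(y))}{2}\leq\frac{\varphi_{K,2}(r)}{\varphi_{1/K,2}(\sqrt{1-r^2})}.
\]

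The second step, which I expect to be the main obstacle, is to bound this ratio in terms of $s$. The target estimate is
\[
\frac{\varphi_{K,2}(r)}{\varphi_{1/K,2}(\sqrt{1-r^2})}\leq\lambda(K)^{1/2}\max\{s^K,s^{1/K}\},
\]
and at $r=1/\sqrt{2}$ one has $s=1$ with both sides equal to $\lambda(K)^{1/2}$ by the very definition of $\lambda(K)$, so the bound is sharp there. To establish it globally I would invoke the sharp estimates for $\varphi_{K,2}$ collected in \cite[Thm 10.35]{avv}, or reprove them using the monotonicity in $K$ of $\mu(\varphi_{K,2}(r))=\mu(r)/K$ together with the asymptotics $\varphi_{K,2}(r)\sim 4^{1-1/K}r^{1/K}$ as $r\to 0^+$ and the companion behaviour near $r=1$ coming from the functional equation above.

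Concatenating the Schwarz lemma, the $\sinh$-conversion, and the $\lambda(K)$-estimate then yields the theorem. The technical heart of the argument is the special-function inequality in the second step: the $\max$ of two power functions reflects the different asymptotic behaviour of $\varphi_{K,2}$ on the two sides of $r=1/\sqrt{2}$, and bridging these two regimes into a single global bound with the correct constant $\lambda(K)^{1/2}$ will require careful control of the Gr\"otzsch capacity $\gamma_2$ near its endpoints.
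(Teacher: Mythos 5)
Your proposal follows essentially the same route as the paper's proof: the quasiregular Schwarz lemma, the conversion ${\rm sh}(t)={\rm th}(t)/\sqrt{1-{\rm th}^2(t)}$, and then a sharp special-function estimate with the constant $\lambda(K)^{1/2}$. The ``main obstacle'' you anticipate is in fact already a known theorem: your ratio $\varphi_{K,2}(r)/\varphi_{1/K,2}(\sqrt{1-r^2})$ equals $\eta_{K,2}(s^2)^{1/2}$ with $s={\rm sh}(\rho_{\B^2}(x,y)/2)$, and the bound $\eta_{K,2}(t)\leq\lambda(K)\max\{t^K,t^{1/K}\}$ (\cite[Thm 10.24, p.~214]{avv}, rather than Thm 10.35, which only gives $\lambda(K)<e^{\pi(K-1/K)}$) yields exactly your target inequality upon taking square roots, which is precisely how the paper concludes.
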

\begin{proof}
According to the Schwarz lemma \cite[Thm 16.2(1), p. 300]{hkv}, 
\begin{align*}
{\rm th}\frac{\rho_{\B^2}(f(x),f(y))}{2}\leq\varphi_{K,2}\left({\rm th}\frac{\rho_{\B^2}(x,y)}{2}\right).    
\end{align*}
For all $u\geq0$, the hyperbolic functions fulfill the following identity:
\begin{align*}
{\rm sh}(u)=\frac{{\rm th}(u)}{\sqrt{1-{\rm th}^2(u)}}.    
\end{align*}
Consequently,
\begin{align*}
{\rm sh}\frac{\rho_{\B^2}(f(x),f(y))}{2}
=\frac{{\rm th}(\rho_{\B^2}(f(x),f(y))/2)}{\sqrt{1-{\rm th}^2(\rho_{\B^2}(f(x),f(y))/2)}}
\leq\frac{\varphi_{K,2}({\rm th}(\rho_{\B^2}(x,y)/2))}{\sqrt{1-\varphi_{K,2}({\rm th}(\rho_{\B^2}(x,y)/2))^2}}.   \end{align*}
Using the function $\eta_{K,2}$ defined in \cite[(10.3), p. 203]{avv}, we have
\begin{align*}
\frac{\varphi_{K,2}({\rm th}(\rho_{\B^2}(x,y)/2))}{\sqrt{1-\varphi_{K,2}({\rm th}(\rho_{\B^2}(x,y)/2))^2}}=\eta_{K,2}\left({\rm sh}^2\frac{\rho_{\B^2}(x,y)}{2}\right)^{1/2}.    
\end{align*}
By \cite[Thm 10.24, p. 214]{avv}, $\eta_{K,2}(t)\leq\lambda(K)\max\{t^K,t^{1/K}\}$. Thus,
\begin{align*}
{\rm sh}\frac{\rho_{\B^2}(f(x),f(y))}{2}&\leq\eta_{K,2}\left({\rm sh}^2\frac{\rho_{\B^2}(x,y)}{2}\right)^{1/2}\\
&\leq\lambda(K)^{1/2}\max\left\{\left({\rm sh}\frac{\rho_{\B^2}(x,y)}{2}\right)^K,\left({\rm sh}\frac{\rho_{\B^2}(x,y)}{2}\right)^{1/K}\right\}.   
\end{align*}
\end{proof}

\begin{corollary}
If $f:\B^2\to\B^2$ is a non-constant $K$-quasiregular mapping, then
\begin{align*}
{\rm sh}\frac{h_{\B^2}(f(x),f(y))}{2}\leq\lambda(K)^{1/2}\sqrt{1-m_3^2}\max\left\{\left(\frac{{\rm sh}(h_{\B^2}(x,y)/2)}{\sqrt{1-m_1^2}}\right)^K,\left(\frac{{\rm sh}(h_{\B^2}(x,y)/2)}{\sqrt{1-m_1^2}}\right)^{1/K}\right\}  
\end{align*}   
for all $x,y\in\B^2$, where $m_1$ and $m_3$ are the Euclidean distances from the origin to the line $L(x,y)$ and to the line $L(f(x),f(y))$, respectively. We have equality here if $K=1$ and $m_1=m_3$.
\end{corollary}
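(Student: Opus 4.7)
The plan is a straightforward chain: combine Theorem \ref{thm_kqus}, which controls the hyperbolic distortion under a $K$-quasiregular self-map of $\B^2$, with Theorem \ref{thm_vuo}, which is precisely the dictionary converting ${\rm sh}(h_{\B^2}/2)$ into ${\rm sh}(\rho_{\B^2}/2)$ up to the factor $\sqrt{1-m^2}$ for the appropriate line. Since we need to translate both the source side (with line $L(x,y)$ and distance $m_1$) and the image side (with line $L(f(x),f(y))$ and distance $m_3$), two applications of Theorem \ref{thm_vuo} are required, which accounts for the two different prefactors appearing in the target inequality.

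The execution proceeds in three short steps. First, I would invoke Theorem \ref{thm_vuo} at the image pair to rewrite
\begin{align*}
{\rm sh}\frac{h_{\B^2}(f(x),f(y))}{2}=\sqrt{1-m_3^2}\,{\rm sh}\frac{\rho_{\B^2}(f(x),f(y))}{2}.
\end{align*}
Second, I would apply Theorem \ref{thm_kqus} directly to bound the right-hand factor by $\lambda(K)^{1/2}\max\{({\rm sh}(\rho_{\B^2}(x,y)/2))^K,({\rm sh}(\rho_{\B^2}(x,y)/2))^{1/K}\}$. Third, I would use Theorem \ref{thm_vuo} at the source pair to substitute
\begin{align*}
{\rm sh}\frac{\rho_{\B^2}(x,y)}{2}=\frac{1}{\sqrt{1-m_1^2}}\,{\rm sh}\frac{h_{\B^2}(x,y)}{2}
\end{align*}
inside both branches of the maximum. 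Concatenating these three substitutions yields exactly the stated bound.

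For the equality assertion, note that when $K=1$ one has $\lambda(1)=1$ and the two exponents collapse, so the inequality reduces to ${\rm sh}(h_{\B^2}(f(x),f(y))/2)\leq\sqrt{(1-m_3^2)/(1-m_1^2)}\,{\rm sh}(h_{\B^2}(x,y)/2)$; imposing $m_1=m_3$ cancels the prefactor, and the only remaining ingredient is the Schwarz lemma bound from Theorem \ref{thm_kqus}, which is an equality precisely when $f$ is a M\"obius automorphism of $\B^2$, in which case $\rho_{\B^2}$ is preserved and Theorem \ref{thm_vuo} forces equality on both Hilbert sides as well. There is no genuine obstacle in the argument; the only bookkeeping worth emphasizing is that the maximum over $K$ and $1/K$ survives the substitution intact, since the $\sqrt{1-m_1^2}$ factor is positive and independent of the exponent.
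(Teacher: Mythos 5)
Your proposal is correct and follows exactly the paper's route: the paper's proof is the one-line ``Follows from Theorems \ref{thm_vuo} and \ref{thm_kqus}'', and your three-step chain (Theorem \ref{thm_vuo} at the image pair, Theorem \ref{thm_kqus}, Theorem \ref{thm_vuo} at the source pair) is precisely the intended argument. Your remark that genuine equality additionally requires $f$ to be a M\"obius automorphism is in fact slightly more careful than the paper's own equality statement.
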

\begin{proof}
Follows from Theorems \ref{thm_vuo} and \ref{thm_kqus}.     
\end{proof}

\bibliographystyle{siamplain}

\begin{thebibliography}{33}

\bibitem{avv}{\sc G.D. Anderson, M.K. Vamanamurthy and
M. Vuorinen}, \emph{Conformal invariants, inequalities and quasiconformal maps.} J. Wiley, 1997.

\bibitem{b99}{\sc  A.F. Beardon}, 
The Klein, Hilbert, and Poincar\'e metrics of a domain,
\emph{J. Comput. Appl. Math., 105} (1999), 155-162.

\bibitem{b98}{\sc A.F. Beardon}, 
The Apollonian metric on domains in $\R^n$, in: P. Duren, J. Heinonen, B. Osgood, B. Palka (Eds.), Quasiconformal Mappings and Analysis: Articles Dedicated to Frederick W. Gehring on the Occasion of his 70th Birthday, Springer, Berlin, 1998, pp. 91–108.

\bibitem{b00}{\sc 
K. Böröczky, G. Kertész, and E. Makai,} 
The minimum area of a simple polygon with given side lengths. \emph{Period Math Hung 39}, (2000), 33–49. 

\bibitem{chkv}{\sc
J. Chen, P. Hariri, R. Kl\'en and M. Vuorinen,}
Lipschitz conditions, triangular ratio metric, and quasiconformal maps.
\emph{Ann. Acad. Sci. Fenn. Math., 40} (2015), 683-709.

\bibitem{f19}{\sc
R. Frigerio and M. Moraschini,} 
On volumes of truncated tetrahedra with constrained edge lengths. \emph{Period Math Hung 79}, (2019), 32–49.


\bibitem{g76}{\sc 
F.W. Gehring and B.~P. Palka,} 
Quasiconformally homogeneous domains,
\emph{J. Analyse Math. 30} (1976), 172--199.

\bibitem{hkv}{\sc
P. Hariri, R. Kl\'en and M. Vuorinen,}
\emph{Conformally Invariant Metrics and Quasiconformal Mappings.}
Springer, 2020.

\bibitem{hil}{\sc 
D. Hilbert},
Ueber die gerade Linie als kurzeste Verbindung zweier Punkte, \emph{Math. Ann. 46} (1895), 91-96.

\bibitem{h}{\sc 
A. Horvath,}
Hyperbolic plane geometry revisited. \emph{J. Geom. 106}, 2, (2015), 341-362.

\bibitem{m14}{\sc
Ž. Milin Šipuš,}
Translation surfaces of constant curvatures in a simply isotropic space. \emph{Period Math Hung 68}, (2014), 160–175.

\bibitem{ird}{\sc 
O. Rainio},
Intrinsic metrics in ring domains. \emph{Complex Anal Synerg 8}, 3, (2022).

\bibitem{sch}{\sc
O. Rainio,} 
Intrinsic metrics under conformal and quasiregular mappings.
\emph{Publ. Math. Debrecen, 101}, 1-2 (2022), 189-215.

\bibitem{sinb}{\sc
O. Rainio and M. Vuorinen,}
Triangular ratio metric in the unit disk.
\emph{Complex Var. Elliptic Equ., 67}, 6 (2022), 1299-1325.

\end{thebibliography}

\end{document}